\definecolor{red}{rgb}{.8,0,0}
\definecolor{blu}{rgb}{0,0,1}
\let\oldproofname=\proofname
\renewcommand{\proofname}{\rm\bf{\textit{\oldproofname}}}
\theoremstyle{definition}
\newenvironment{example}
  {\pushQED{\qed}\examplex}
  {\popQED\endexamplex}
\theoremstyle{remark}
\newenvironment{remark}
  {\pushQED{\qed}\remarkx}
  {\popQED\endremarkx}
\theoremstyle{plain}
\newtheorem{theorem}{Theorem}[section]
\newtheorem{corollary}[theorem]{Corollary}
\newtheorem{lemma}[theorem]{Lemma}
\newtheorem{proposition}[theorem]{Proposition}
\theoremstyle{definition}
\newtheorem*{note*}{Note}
\newcommand{\bzero}{\mathbb{0}} 
\newcommand{\bmu}{\bm{\mu}} 
\newcommand{\open}{``}
\newcommand{\diag}{\operatorname{diag}}
\newcommand{\diam}{\operatorname{diam}}
\newcommand{\dist}{\operatorname{dist}}
\newcommand{\conc}{\operatorname{conc}}
\newcommand{\bit}{\begin{itemize}}
\newcommand{\eit}{\end{itemize}}
\newcommand{\ben}{\begin{enumerate}}
\newcommand{\een}{\end{enumerate}}
\newcommand{\beq}{\begin{equation}}
\newcommand{\eeq}{\end{equation}}
\newcommand{\bea}{\begin{eqnarray*}}
\newcommand{\eea}{\end{eqnarray*}}
\newcommand{\bean}{\begin{eqnarray}}
\newcommand{\eean}{\end{eqnarray}}
\newcommand{\bpf}{\begin{proof}}
\newcommand{\epf}{\end{proof}}
\newcommand{\mc}{\mathcal}
\DeclareSymbolFontAlphabet{\amsmathbb}{AMSb}
\begin{document}

\title{On  Kemeny's constant for trees with fixed order and diameter}

\author{
Lorenzo Ciardo\textsuperscript{a}\thanks{Corresponding author. E-mail: \tt lorenzci@math.uio.no}
,
Geir Dahl\textsuperscript{a}\thanks{E-mail: \tt geird@math.uio.no}
, and
Steve Kirkland\textsuperscript{b}\thanks{E-mail: \tt stephen.kirkland@umanitoba.ca}
}
\date{\textsuperscript{a} Department of Mathematics, University of Oslo, Oslo, Norway;\\
\textsuperscript{b} Department of Mathematics, University of Manitoba, Winnipeg, MB, Canada.}

\maketitle


\begin{abstract}
\noindent Kemeny's constant $\kappa(G)$ of a connected graph $G$ is a measure of the expected transit time for the random walk associated with $G$. In the current work, we consider the case when $G$ is a tree, and, in this setting, we provide lower and upper bounds for $\kappa(G)$ in terms of the order $n$ and diameter $\delta$ of $G$ by using two different techniques. The lower bound is given as Kemeny's constant of a particular caterpillar tree and, as a consequence, it is sharp. The upper bound is found via induction, by repeatedly removing pendent vertices from $G$. By considering a specific family of trees -- the broom-stars -- we show that the upper bound is asymptotically sharp.
\end{abstract}

\noindent{\bf Keywords.} 
Kemeny's constant; random walk on a graph; Markov chain; stochastic matrix; caterpillar

\medskip

\noindent{\bf AMS subject classifications.}
05C81, 
05C50, 
60J10, 
05C12, 
94C15  

\medskip\medskip


\section{Introduction}
In the information age, when people and ideas have virtually no physical barriers other than the ones of communicability, the role of information media is becoming increasingly  crucial. Having a direct control on the global flow of information is progressively difficult; an indirect control, however, is possible by acting on the architecture and geometry of networks transferring data (\cite{Meyn}). 

Markov chains are a widely used model for a physical entity moving in a network in discrete time steps. Consider the network as a simple nontrivial connected undirected graph $G=(V(G),E(G))$, where the vertices in $V(G)$ represent states and the edges in $E(G)$ represent connections between states. Given $i,j\in V(G)$, we denote the probability of going to $j$ in one step starting from $i$ by the real number $t_{ij}$, with $0\leq t_{ij}\leq 1$. The lack of a connection between $i$ and $j$ makes it impossible to go from $i$ to $j$ in one step, and this is reflected by choosing $t_{ij}=0$ if $\{i,j\}\not \in E(G)$. Moreover, we require that $\sum_{j\in V(G)}t_{ij}=1$ for $i\in V(G)$. Note that this description implies \textit{absence of memory}: the behavior of the system in the next time step does not depend on the complete history of the process, but only on the current state. The so-called \textit{transition matrix} $T=[t_{ij}]\in M_n$ (where $n=|V(G)|$) thus encodes the entire behavior of the Markov chain. As a consequence, it is possible to use linear algebraic techniques to predict the short and long term behavior of the system. Observe that $T$ is a real nonnegative row-stochastic matrix. If $T$ is \textit{irreducible} -- meaning that for any pair of indices $i,j$ there exists $k\in\amsmathbb{N}$ such that $(T^k)_{ij}>0$ -- by virtue of Perron-Frobenius theory there is a unique \textit{stationary distribution} $\textbf{w}=(w_i)\in\amsmathbb{R}^n$, which satisfies $\textbf{w}^TT=\textbf{w}^T$, $\textbf{w}>\bzero$ and $\textbf{w}^T\textbf{e}=1$ (where $\textbf{e}\in\amsmathbb{R}^n$ is the all ones vector). Notice, in particular, that $\textbf{w}$ is a probability distribution. If, in addition, $T$ is \textit{primitive} -- i.e., $\exists k \in \amsmathbb{N}$ such that $T^k>0$ -- then the Markov chain will converge to $\textbf{w}$ regardless of the initial probability distribution. As a consequence, we say that $\textbf{w}$ encodes the long-term behavior of the system. A way to look at the short-term behavior is to consider the so--called \textit{mean first passage matrix} $M=[m_{ij}]\in M_n$, where $m_{ij}$ is the expected number of steps needed to reach state $j$ for the first time starting from state $i$. For background on the theory of Markov chains we refer the reader to \cite{KemenySnell} and \cite{Seneta}.

 A meaningful indicator of the communicability in a network arises by combining long-term and short-term behavior of the associated Markov chain. It was shown in \cite{KemenySnell} that the quantity 
\begin{equation*}
\kappa_i(T)\coloneqq (M\textbf{w})_i-1=\sum_{j=1}^nm_{ij}w_j-1
\end{equation*}
is not dependent on the choice of $i$, and it is thus a constant for the Markov chain. This quantity is called \textit{Kemeny's constant} and it is denoted by $\kappa(T)$. As, clearly,
\begin{equation*}
\kappa(T)=\textbf{w}^TM\textbf{w}-1=\sum_{i,j=1}^nm_{ij}w_iw_j-1,
\end{equation*}
we see that $\kappa(T)$ measures the expected travel time between two randomly chosen states, sampled accordingly to the stationary distribution $\textbf{w}$\footnote{The term \open $-1$" in the definitions of $\kappa_i(T)$ and $\kappa(T)$ is convenient in order to yield the following expression for  Kemeny's constant in terms of the spectrum $\sigma(T)=\{1,\lambda_2,\lambda_3,\dots,\lambda_n\}$ of the transition matrix $T$: $\kappa(T)=\sum_{j=2}^n\frac{1}{1-\lambda_j}$ (see \cite{KemenySnell}).}. We can then see  Kemeny's constant as a  network metric that measures the long--run ability to transmit information: the smaller $\kappa(T)$ is, the faster information can spread in the network (\cite{Breen,Catral,Hunter}). As a consequence, one can control the long--run diffusion rate of the information flow by performing modifications on the network which lead to the desired change in the value of  Kemeny's constant. Particularly interesting in this regard, is the phenomenon known as the \textit{Braess' paradox for graphs}, which occurs when adding a new connection in the network has the counter--intuitive effect of \textit{increasing} the value of  Kemeny's constant instead of decreasing it (\cite{Ciardo_braess,HuKirkland,KirklandZeng}). 

In the present work, we focus on a particular kind of Markov chain on $G$, where, in each step, a random walker moves from a vertex $i$ (the current position) to one of the neighbors of $i$ with all the neighbors being equally likely (see \cite{Lovasz} for a survey on this type of random walk). Let $A$ be the adjacency matrix of $G$ and $D=\diag(A\textbf{e})$ be its diagonal degree matrix. We observe that, in this case, the transition matrix is given by $T=D^{-1}A$ (which is irreducible since $G$ is connected). For such a Markov chain the stationary distribution, the mean first passage matrix and  Kemeny's constant are each determined once the graph $G$ is fixed. Hence, we can write $\kappa(G)$ instead of $\kappa(T)$. The problem of understanding how the structure of the network influences the value of  Kemeny's constant then  becomes  entirely graph--theoretical.

The aim of the current work is to exhibit a lower and an upper bound for  Kemeny's constant for a tree -- i.e., an acyclic connected graph -- in terms of its order $n$ and its diameter $\delta$. Following the interpretation given above, this provides knowledge about the long--term spread of information  in a network once two basic quantities of the network -- the number of nodes and the maximum distance -- are known. In Section \ref{sec_recursive_formula_kemeny_constant_tree} we give a recursive formula for  Kemeny's constant of a tree in terms of  Kemeny's constant of certain subtrees. Both lower and upper bounds make use of this formula, but they are built following two different approaches. For the former, we explicitly exhibit the minimizers of  Kemeny's constant among the trees having fixed order and diameter, and we show that they belong to the class of so-called \textit{caterpillars}. The recursive formula mentioned above is particularly simple in the case of caterpillars, and this allows us to obtain a (sharp) lower bound as the explicit expression for  Kemeny's constant of the minimizers. This is done in Sections \ref{sec_extremal_caterpillars} and \ref{sec_lower_bound_kememy}. In contrast, the upper bound is obtained by using the recursive formula in an inductive argument (Section \ref{sec_upper_bound}). Finally, in Section \ref{sec_asymptotic_analysis_upper_bound} we consider a family of trees -- the \textit{broom-stars} -- that attain a particularly large value for Kemeny's constant. We use this family to show that the upper bound is asymptotically sharp.

\medskip	
\noindent\underline{Notation}:
We  let $\amsmathbb{R}^n$ denote the space of $n$-dimensional real column vectors, and we identify such vectors with the corresponding $n$-tuples.
The $i$--th unit vector in $\amsmathbb{R}^n$ is denoted by $\textbf{e}_i$, and the all  ones vector in $\amsmathbb{R}^n$ is denoted by $\textbf{e}$. $M_n$ denotes the space of real square matrices of order $n$, and $M_{n_1,n_2}$ denotes the space of real $n_1\times n_2$ matrices. The vertex set and the edge set of a graph $G$ are denoted by $V(G)$ and $E(G)$, respectively. The \textit{order} and the \textit{size} of $G$ are $|V(G)|$ and $|E(G)|$, respectively. The \textit{trivial graph} is the graph having one vertex. If $v\in V(G)$ is a \textit{pendent vertex} (i.e., a vertex of degree $1$), $G-v$ denotes the graph obtained from $G$ by removing $v$ and the unique edge incident to $v$. We identify two graphs when they are isomorphic. We denote the \textit{star tree} having $n$ vertices by $S(n)$. A \textit{rooted tree} $(T,r)$ is a tree with a distinguished vertex (\textit{root}) $r\in V(T)$. When the root $r$ is clear or not relevant in the context, we write $T$ instead of $(T,r)$. 

\section{Concatenation of trees}
\label{sec_recursive_formula_kemeny_constant_tree}
Given a tree $T$ having $n$ vertices, let $\textbf{d}\in\amsmathbb{R}^n$ be its \textit{degree vector}, whose $i$--th entry $d_i$ is the degree of vertex $i$. Also, let $\Delta\in M_n$ be its \textit{distance matrix}, whose $ij$--th entry $\Delta_{ij}$ is the number of edges in the path connecting vertex $i$ and vertex $j$. In \cite[Theorem 3.1.]{KirklandZeng} we find the following combinatorial formula for  Kemeny's constant $\kappa(T)$ of a nontrivial tree $T$:
\begin{equation}
\label{eq_kemeny_constant_tree_1723_05_nov}
\kappa(T)=\frac{\textbf{d}^T\Delta\textbf{d}}{4(n-1)}
\end{equation}
(by convention, we set  Kemeny's constant of the trivial tree to be equal to $0$).

In this section we use \eqref{eq_kemeny_constant_tree_1723_05_nov} to provide an expression for  Kemeny's constant of a tree obtained by concatenating some given rooted trees. In the rest of the paper this will be used to find  lower and upper bounds  on   Kemeny's constant for trees with fixed order and diameter. 

Let $(R,r)$ be a rooted tree. We define its \textit{moment} $\mu(R,r)$ as follows:
\begin{equation}
\label{def_moment_of_inertia_1501_01_nov}
\mu(R,r)=\sum_{v\in V(R)}\dist(v,r)\deg(v).
\end{equation}
The motivation for this name comes from the homonymous notion in mechanics. Given a force $\textbf{F}$ applied to a point particle having position $\textbf{P}$ with respect to a fixed point (\open fulcrum"), the moment or \textit{torque} of $\textbf{F}$ is the vector $\textbf{P}\times\textbf{F}$. The moment of a set of forces $\textbf{F}_1,\textbf{F}_2,\dots,\textbf{F}_n$, each one applied to a point particle having position $\textbf{P}_1,\textbf{P}_2,\dots,\textbf{P}_n$, is simply $\sum_{i=1}^n\textbf{P}_i\times\textbf{F}_i$. Let us now draw the tree $R$ by arranging its vertices horizontally, starting from the root $r$ (which we consider as the fixed fulcrum). Suppose, also, that the degree of a vertex represents its weight. Then, formula \eqref{def_moment_of_inertia_1501_01_nov} gives the (magnitude of) the moment of the gravity force acting on the vertices of $R$ (Figure \ref{fig_tree_with_moment}).
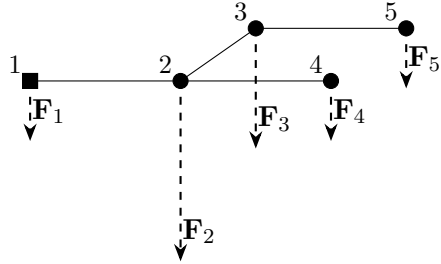
\begin{figure}[h]
\centering
\begin{tikzpicture}[scale=1]
%
\draw [fill,draw=black] (-.1,-.1) rectangle ++(.2,.2);
\draw[fill]  (2,0) circle (0.1cm);
\draw[fill]  (4,0) circle (0.1cm);
\draw[fill]  (3,.7) circle (0.1cm);
\draw[fill]  (5,.7) circle (0.1cm);
\draw  (-0.2,0.225) node{\footnotesize{$1$}};
\draw  (2-0.2,0.225) node{\footnotesize{$2$}};
\draw  (4-0.2,0.225) node{\footnotesize{$4$}};
\draw  (3-0.2,.7+0.225) node{\footnotesize{$3$}};
\draw  (5-0.2,.7+0.225) node{\footnotesize{$5$}};
%
%
\draw  (.25,-.4) node{\small{$\textbf{F}_1$}};
\draw  (2.25,-2) node{\small{$\textbf{F}_2$}};
\draw  (4.25,-.4) node{\small{$\textbf{F}_4$}};
\draw  (3.25,.7-1.2) node{\small{$\textbf{F}_3$}};
\draw  (5.25,.7-.4) node{\small{$\textbf{F}_5$}};
%
\draw[black] (0,0) -- (2,0);
\draw[black] (2,0) -- (4,0);
\draw[black] (2,0) -- (3,.7);
\draw[black] (3,.7) -- (5,.7);
%
%
\draw[-{Stealth[scale=1]},thick,shorten >=0pt,shorten <=0pt,dashed] (0,0) -- (0,-.8*1);
\draw[-{Stealth[scale=1]},thick,shorten >=0pt,shorten <=0pt,dashed] (2,0) -- (2,-.8*3);
\draw[-{Stealth[scale=1]},thick,shorten >=0pt,shorten <=0pt,dashed] (4,0) -- (4,-.8*1);
\draw[-{Stealth[scale=1]},thick,shorten >=0pt,shorten <=0pt,dashed] (3,.7) -- (3,.7-.8*2);
\draw[-{Stealth[scale=1]},thick,shorten >=0pt,shorten <=0pt,dashed] (5,.7) -- (5,.7-.8*1);
\end{tikzpicture}
\caption{\footnotesize{A rooted tree having vertex $1$ as root. The weight of each vertex is its degree, so that the moment is $\dist(1,1)|\textbf{F}_1|+\dist(2,1)|\textbf{F}_2|+\dist(3,1)|\textbf{F}_3|+\dist(4,1)|\textbf{F}_4|+\dist(5,1)|\textbf{F}_5|=0\cdot 1+1\cdot 3+2\cdot 2+2\cdot 1+3\cdot 1=12$.}}
\label{fig_tree_with_moment}
\end{figure}
\begin{lemma}
\label{lemma_moment_star}
Let $(R,r)$ be a rooted tree of order $n$. Then
\begin{equation*}
\mu(R,r)\geq n-1,
\end{equation*}
with equality if and only if $R$ is the star $S(n)$ and $r$ is the central vertex of $S(n)$.
\end{lemma}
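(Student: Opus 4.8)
The plan is to convert the vertex sum defining $\mu(R,r)$ into a sum over edges. Since $\deg(v)$ counts the edges incident to $v$, swapping the order of summation yields
\[
\mu(R,r) \;=\; \sum_{v\in V(R)}\dist(v,r)\deg(v) \;=\; \sum_{\{u,w\}\in E(R)}\bigl(\dist(u,r)+\dist(w,r)\bigr),
\]
so the problem reduces to controlling, for each edge, the sum of the distances of its two endpoints to $r$.

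The key step --- and essentially the only one requiring an idea --- is the observation that two adjacent vertices of a rooted tree lie at consecutive distances from the root. I would argue this by deleting the edge $\{u,w\}$: this splits $R$ into two components, $r$ lies in exactly one of them, say the one containing $u$, and then the unique path from $r$ to $w$ in $R$ runs through $u$ and the edge $\{u,w\}$, so $\dist(w,r)=\dist(u,r)+1$. Hence each edge $e$ has a well-defined endpoint closer to $r$, at some distance $k_e\ge 0$, with the other endpoint at distance $k_e+1$, giving $\dist(u,r)+\dist(w,r)=2k_e+1$. Plugging this into the edge sum and using that a tree on $n$ vertices has exactly $n-1$ edges,
\[
\mu(R,r) \;=\; \sum_{e\in E(R)}(2k_e+1)\;=\;(n-1)+2\sum_{e\in E(R)}k_e\;\ge\;n-1 .
\]

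For the equality case, the displayed identity shows that $\mu(R,r)=n-1$ holds exactly when $k_e=0$ for every edge $e$, i.e.\ when every edge of $R$ is incident to $r$; a tree with that property is precisely the star $S(n)$ with center $r$, and conversely that star satisfies $k_e=0$ for all its edges. The degenerate case $n=1$ is immediate, with $\mu=0=n-1$. I do not anticipate a genuine obstacle: once the consecutive-distance observation is in place, both the inequality and the equality analysis are bookkeeping. (An induction on $n$ that deletes a pendent vertex $v\ne r$ with neighbor $w$, using $\mu(R,r)=\mu(R-v,r)+2\dist(w,r)+1$ together with the inductive hypothesis applied to $R-v$, would also work, but the edge-sum computation is shorter and case-free.)
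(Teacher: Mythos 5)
Your proof is correct, but it takes a different (and somewhat longer) route than the paper's. The paper simply drops the $v=r$ term from the defining sum and bounds each of the remaining $n-1$ summands below by $1$, since every non-root vertex has $\dist(v,r)\geq 1$ and $\deg(v)\geq 1$; equality then forces every non-root vertex to have degree $1$ and distance $1$ from $r$, which characterizes the star. You instead rewrite $\mu(R,r)$ as a sum over the $n-1$ edges, prove that adjacent vertices of a tree sit at consecutive distances from the root, and obtain the exact identity $\mu(R,r)=(n-1)+2\sum_{e}k_e$ with $k_e$ the distance of the endpoint of $e$ nearer to $r$. Both arguments are sound and elementary; the paper's is a genuine one-liner requiring no auxiliary observation, while yours costs the consecutive-distance lemma but buys an exact formula refining the inequality (the excess $\mu(R,r)-(n-1)$ is twice the sum of the $k_e$), and it makes the equality case transparent as ``every edge meets $r$.'' Either version would serve the paper's purposes equally well.
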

\begin{proof}
From the definition \eqref{def_moment_of_inertia_1501_01_nov} of moment we have
\begin{align*}
\mu(R,r)&=\sum_{v\in V(R)}\dist(v,r)\deg(v)
=\sum_{\substack{v\in V(R)\\v\neq r}}\dist(v,r)\deg(v)
\geq \sum_{\substack{v\in V(R)\\v\neq r}}1\cdot 1
=n-1.
\end{align*}
Equality holds precisely when all the vertices except the root $r$ have degree $1$ and distance $1$ from $r$, i.e., when $R$ is the star $S(n)$ and $r$ is the central vertex of $S(n)$.
\end{proof}
Given an integer $k\geq 1$, consider $k$ rooted trees $(T_1,r_1),(T_2,r_2),\dots,(T_k,r_k)$. For each ${\ell}=1,2,\dots,k$ let $n_{\ell}=|V(T_{\ell})|$ be the order of $T_{\ell}$, $m_{\ell}=|E(T_{\ell})|=n_{\ell}-1$ be the size of $T_{\ell}$, $\textbf{d}^{({\ell})}=(d^{({\ell})}_i)\in\amsmathbb{R}^{n_{\ell}}$ be the degree vector of $T_{\ell}$, $\Delta^{({\ell})}=[\Delta^{({\ell})}_{ij}]\in M_{n_{\ell}}$ be the distance matrix of $T_{\ell}$ and $\mu_{\ell}=\mu(T_{\ell},r_{\ell})=\textbf{e}_1^T\Delta^{({\ell})}\textbf{d}^{({\ell})}$ be the moment of $(T_{\ell},r_{\ell})$ (where the vertices of $T_{\ell}$ are labeled in such a way that vertex $1$ corresponds to the root $r_{\ell}$). 
We also define the \textit{size vector} $\textbf{m}=(m_1,m_2,\dots,m_k)\in\amsmathbb{R}^k$ and the \textit{moment vector} $\bmu=(\mu_1,\mu_2,\dots,\mu_k)\in\amsmathbb{R}^k$. Consider the tree $T=\conc((T_1,r_1),(T_2,r_2),\dots,(T_k,r_k))$ obtained by taking the disjoint union of $T_1,T_2,\dots,T_k$ and adding the edges $r_1r_2,r_2r_3,\dots,r_{k-1}r_k$ (Figure \ref{fig_concatenation_trees}), and let its order be $n=\sum_{{\ell}=1}^kn_\ell$. We shall refer to $T$ as to the \textit{concatenation} of $(T_1,r_1),(T_2,r_2),\dots,(T_k,r_k)$. Finally, we introduce the symmetric Toeplitz matrix $X=[x_{ij}]\in M_k$ defined by $x_{ij}=|i-j|$ $(i,j \le k)$.
\begin{figure}[h]
\centering
\begin{tikzpicture}[scale=.8]
%
\draw  (0,.8) node{$T_1$};
\draw(.35,-.25) node{\footnotesize{$r_1$}};
\draw [fill,draw=black] (-.1,-.1) rectangle ++(.2,.2);
\draw[fill]  (0,-2) circle (0.1cm);
\draw[fill] (-1,-2-1.7321) circle (0.1cm);
\draw[fill] (1,-2-1.7321) circle (0.1cm);
\draw[black] (0,0) -- (0,-2);
\draw[black] (0,-2) -- (-1,-2-1.7321);
\draw[black] (0,-2) -- (1,-2-1.7321);
%
%
\begin{scope}[shift={(2.5,0)}]
\draw  (0,0.8) node{$T_2$};
\draw(.35,-.25) node{\footnotesize{$r_2$}};
\draw [fill,draw=black] (0-.1,-.1) rectangle ++(.2,.2);
\end{scope}
%
%
\begin{scope}[shift={(5,0)}]
\draw  (0,0.8) node{$T_3$};
\draw(.35,-.25) node{\footnotesize{$r_3$}};
\draw [fill,draw=black] (0-.1,-.1) rectangle ++(.2,.2);
\draw[fill] (-1,-1.7321) circle (0.1cm);
\draw[fill] (1,-1.7321) circle (0.1cm);
\draw[fill]  (0,-2) circle (0.1cm);
\draw[black] (0,0) -- (-1,-1.7321);
\draw[black] (0,0) -- (1,-1.7321);
\draw[black] (0,0) -- (0,-2);
\end{scope}
%
%
\begin{scope}[shift={(7.5,0)}]
\draw  (0,0.8) node{$T_4$};
\draw(.35,-.25) node{\footnotesize{$r_4$}};
\draw [fill,draw=black] (0-.1,-.1) rectangle ++(.2,.2);
\draw[fill]  (0,-2) circle (0.1cm);
\draw[fill]  (0,-4) circle (0.1cm);
\draw[black] (0,0) -- (0,-2);
\draw[black] (0,-2) -- (0,-4);
\end{scope}
%
%
%
\begin{scope}[shift={(12,0)},scale=.7]
\draw  (3,0.8+.5) node{$T$};
\draw [fill] (0,0) circle (0.1cm);
\draw[fill]  (0,-2) circle (0.1cm);
\draw[fill] (-1,-2-1.7321) circle (0.1cm);
\draw[fill] (1,-2-1.7321) circle (0.1cm);
\draw [fill] (2,0) circle (0.1cm);
\draw [fill] (4,0) circle (0.1cm);
\draw[fill] (4-1,-1.7321) circle (0.1cm);
\draw[fill] (4+1,-1.7321) circle (0.1cm);
\draw[fill]  (4,-2) circle (0.1cm);
\draw [fill] (6,0) circle (0.1cm);
\draw [fill] (6,-2) circle (0.1cm);
\draw [fill] (6,-4) circle (0.1cm);
\draw[black] (0,0) -- (0,-2);
\draw[black] (0,-2) -- (-1,-2-1.7321);
\draw[black] (0,-2) -- (1,-2-1.7321);
\draw[black] (4,0) -- (4-1,-1.7321);
\draw[black] (4,0) -- (4+1,-1.7321);
\draw[black] (4,0) -- (4,-2);
\draw[black] (6,0) -- (6,-2);
\draw[black] (6,-2) -- (6,-4);
\draw[black] (0,0) -- (2,0);
\draw[black] (2,0) -- (4,0);
\draw[black] (4,0) -- (6,0);
\end{scope}
\end{tikzpicture}
\caption{\footnotesize{Four rooted trees and their concatenation $T=\conc((T_1,r_1),(T_2,r_2),(T_3,r_3),(T_4,r_4))$.}}
\label{fig_concatenation_trees}
\end{figure}
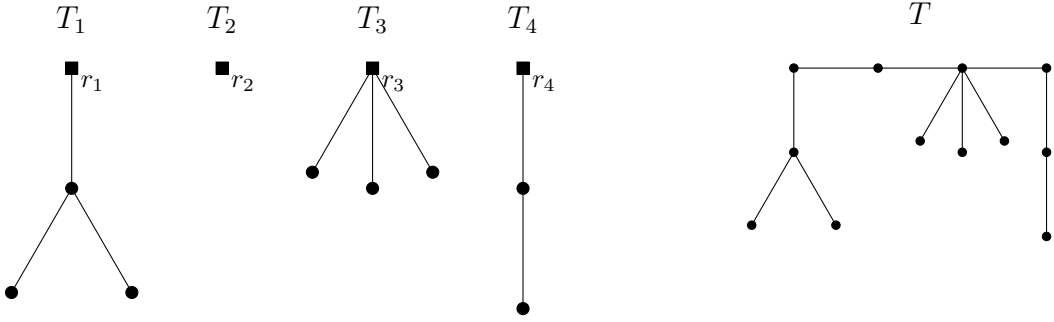
\begin{proposition}
\label{prop_kemeny_recursive_22_oct_2019}
Let $T=\conc((T_1,r_1),(T_2,r_2),\dots,(T_k,r_k))$. If $T$ is nontrivial, its Kemeny's constant may be expressed as
\begin{equation*}
\resizebox{1\hsize}{!}{$\displaystyle
\kappa(T)=\frac{1}{n-1}\left(\sum_{i=1}^km_i\kappa(T_i)
+\textbf{m}^TX(\textbf{m}+2\textbf{e})+(n-1)\bmu^T\textbf{e}
-{\bmu}^T\textbf{m}
+\frac{k^3}{3}-kn+n+\frac{k}{6}-\frac{1}{2}\right).
$}
\end{equation*}
\end{proposition}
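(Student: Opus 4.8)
Here is how I would approach this.

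The plan is to deduce everything from the combinatorial identity \eqref{eq_kemeny_constant_tree_1723_05_nov}: since $\kappa(T)=\textbf{d}^T\Delta\textbf{d}/\big(4(n-1)\big)$, where $\textbf{d}$ and $\Delta$ are the degree vector and the distance matrix of $T$, it suffices to evaluate $\textbf{d}^T\Delta\textbf{d}$ in terms of the data of the rooted trees $(T_\ell,r_\ell)$. First I would record two structural facts about the concatenation. (i)~A vertex $v\in V(T_\ell)$ keeps its $T_\ell$-degree inside $T$ unless $v=r_\ell$, in which case its degree increases by $c_\ell$, the number of spine edges at $r_\ell$; hence $c_\ell=2-[\ell=1]-[\ell=k]$, i.e., $\textbf{c}\coloneqq(c_1,\dots,c_k)=2\textbf{e}-\textbf{e}_1-\textbf{e}_k$ (reading $\textbf{e}_1=\textbf{e}_k=\textbf{e}$ when $k=1$). (ii)~For $u\in V(T_a)$ and $v\in V(T_b)$, the $u$--$v$ path in $T$ passes through $r_a$ and $r_b$, so $\dist_T(u,v)=\dist_{T_a}(u,r_a)+|a-b|+\dist_{T_b}(v,r_b)$, which collapses to $\dist_{T_a}(u,v)$ when $a=b$.

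Next I would block-decompose the quadratic form along $V(T)=V(T_1)\cup\dots\cup V(T_k)$. Writing $\widehat{\textbf{d}}^{(\ell)}\coloneqq\textbf{d}^{(\ell)}+c_\ell\textbf{e}_1$ for the degree vector that the vertices of $T_\ell$ carry inside $T$, and $s_\ell\coloneqq\textbf{e}^T\widehat{\textbf{d}}^{(\ell)}=2m_\ell+c_\ell$, facts (i)--(ii) give
\begin{equation*}
\textbf{d}^T\Delta\textbf{d}=\sum_{\ell=1}^k\big(\widehat{\textbf{d}}^{(\ell)}\big)^T\Delta^{(\ell)}\widehat{\textbf{d}}^{(\ell)}+\sum_{a\neq b}\Big(\mu_a s_b+|a-b|\,s_a s_b+s_a\mu_b\Big),
\end{equation*}
where I used $\textbf{e}_1^T\Delta^{(\ell)}\widehat{\textbf{d}}^{(\ell)}=\textbf{e}_1^T\Delta^{(\ell)}\textbf{d}^{(\ell)}=\mu_\ell$ (since $\Delta^{(\ell)}_{11}=0$). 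Each diagonal block expands, again via \eqref{eq_kemeny_constant_tree_1723_05_nov} and the definition of $\mu_\ell$, as
\begin{equation*}
\big(\widehat{\textbf{d}}^{(\ell)}\big)^T\Delta^{(\ell)}\widehat{\textbf{d}}^{(\ell)}=\big(\textbf{d}^{(\ell)}\big)^T\Delta^{(\ell)}\textbf{d}^{(\ell)}+2c_\ell\mu_\ell=4m_\ell\kappa(T_\ell)+2c_\ell\mu_\ell
\end{equation*}
(the middle identity being $0=0$ when $T_\ell$ is trivial).

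The remaining work is simplification, using $\sum_\ell s_\ell=2(n-1)$ (the degree sum of the tree $T$), $\sum_\ell c_\ell=2(k-1)$ and $\textbf{e}^T\textbf{m}=n-k$. The $\bmu$-linear part telescopes:
\begin{equation*}
\sum_{a\neq b}(\mu_a s_b+s_a\mu_b)=2\sum_a\mu_a\big(2(n-1)-s_a\big)=4(n-1)\bmu^T\textbf{e}-4\bmu^T\textbf{m}-2\sum_a c_a\mu_a,
\end{equation*}
and the last sum cancels the $2c_\ell\mu_\ell$ contributed by the diagonal blocks. For the Toeplitz part, $\sum_{a\neq b}|a-b|\,s_a s_b=\textbf{s}^TX\textbf{s}$ with $\textbf{s}=2(\textbf{m}+\textbf{e})-(\textbf{e}_1+\textbf{e}_k)$; expanding this and substituting $X(\textbf{e}_1+\textbf{e}_k)=(k-1)\textbf{e}$, $\textbf{e}^TX\textbf{e}=2\binom{k+1}{3}=\tfrac{k^3-k}{3}$ and $(\textbf{e}_1+\textbf{e}_k)^TX(\textbf{e}_1+\textbf{e}_k)=2(k-1)$ should reduce the constant terms to $\tfrac{k^3}{3}-kn+n+\tfrac k6-\tfrac12$, yielding
\begin{align*}
\tfrac14\textbf{d}^T\Delta\textbf{d}&=\sum_{\ell=1}^k m_\ell\kappa(T_\ell)+\textbf{m}^TX\textbf{m}+2\textbf{m}^TX\textbf{e}+(n-1)\bmu^T\textbf{e}-\bmu^T\textbf{m}\\
&\quad+\tfrac{k^3}{3}-kn+n+\tfrac k6-\tfrac12 .
\end{align*}
Dividing by $n-1$ and recognizing $\textbf{m}^TX\textbf{m}+2\textbf{m}^TX\textbf{e}=\textbf{m}^TX(\textbf{m}+2\textbf{e})$ then gives the asserted formula.

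I expect the only genuine difficulty to be bookkeeping rather than anything conceptual: one has to track the degree corrections $c_\ell$ at the spine vertices, keep the endpoint terms $\textbf{e}_1+\textbf{e}_k$ straight (including the degenerate cases $k\in\{1,2\}$, where the two endpoints coincide or leave no interior spine vertices), and pin down the scalar constants --- in particular $\sum_{1\le i<j\le k}(j-i)=\binom{k+1}{3}$ and $X(\textbf{e}_1+\textbf{e}_k)=(k-1)\textbf{e}$ --- since any slip there leaks straight into the additive constant of the formula.
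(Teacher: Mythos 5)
Your proposal is correct and follows essentially the same route as the paper: both rest on formula \eqref{eq_kemeny_constant_tree_1723_05_nov}, the block structure $\Delta^{(ab)}=\Delta^{(a)}\textbf{e}_1\textbf{e}^T+\textbf{e}\textbf{e}_1^T\Delta^{(b)}+|a-b|\textbf{e}\textbf{e}^T$, and the degree correction $2\textbf{e}-\textbf{e}_1-\textbf{e}_k$ at the roots, differing only in bookkeeping (you group by blocks of $\Delta$ and reduce the cross terms to the scalars $\mu_\ell,s_\ell$, whereas the paper expands $\textbf{d}=\tilde{\textbf{d}}+2\textbf{g}-\textbf{h}$ into six bilinear forms). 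Your constants check out, including $\textbf{e}^TX\textbf{e}=\tfrac{k^3-k}{3}$, $X(\textbf{e}_1+\textbf{e}_k)=(k-1)\textbf{e}$, and the cancellation of the $2c_\ell\mu_\ell$ terms.
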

\begin{proof}
If $k=1$, a straightforward computation shows that the result holds, so assume that $k\geq 2$. Denote the degree vector of $T$ by $\textbf{d}\in\amsmathbb{R}^n$. Using a suitable ordering of vertices in $T$ and defining
\begin{align*}
\tilde{\textbf{d}}=\begin{bmatrix}
\textbf{d}^{(1)} \\ 
\textbf{d}^{(2)} \\ 
\vdots\\
\textbf{d}^{(k-1)}\\
\textbf{d}^{(k)}
\end{bmatrix},
\hspace{.5cm} 
\textbf{g}=
\begin{bmatrix}
\textbf{e}_1\\
\textbf{e}_1\\
\vdots\\
\textbf{e}_1\\
\textbf{e}_1
\end{bmatrix},
\hspace{.5cm}
\textbf{h}=
\begin{bmatrix}
\textbf{e}_1\\
\bzero\\
\vdots\\
\bzero\\
\textbf{e}_1
\end{bmatrix},
\end{align*}
we see that 
\begin{align*}
\textbf{d}=\tilde{\textbf{d}} + 2\textbf{g} -\textbf{h}.
\end{align*}
We partition the distance matrix $\Delta=[\Delta_{ij}]\in M_n$ of $T$ as follows:
\begin{align*}
\Delta=\begin{bmatrix}
\Delta^{(11)} & \Delta^{(12)} & \cdots & \Delta^{(1k)}\\
\Delta^{(21)} & \Delta^{(22)} & \cdots & \Delta^{(2k)}\\
\vdots & \vdots & \ddots & \vdots\\
\Delta^{(k1)} & \Delta^{(k2)} & \cdots & \Delta^{(kk)}\\
\end{bmatrix}
\end{align*}
where $\Delta^{(ij)}\in M_{n_i,n_j}$. Given two vertices $u,v\in V(T)$ and supposing that $u$ and $v$ both belong to $V(T_i)$ for some $i\in\{1,2,\dots,k\}$, we see that $\Delta_{uv}=\Delta^{(i)}_{uv}$. This shows that $\Delta^{(ii)}=\Delta^{(i)}$ for $i\in\{1,2,\dots,k\}$. On the other hand, if $u\in V(T_{i})$ and $v\in V(T_{j})$ with $i\neq j$, then
\begin{align*}
\Delta_{uv}=\Delta^{(i)}_{u1}+\Delta^{(j)}_{1v}+|i-j|=\textbf{e}_u^T\Delta^{(i)}\textbf{e}_1+\textbf{e}_1^T\Delta^{(j)}\textbf{e}_v+|i-j|
\end{align*}
and hence
\begin{align*} 
\Delta^{(ij)}=\Delta^{(i)}\textbf{e}_1\textbf{e}^T+\textbf{e}\textbf{e}_1^T\Delta^{(j)}+|i-j|\textbf{e}\textbf{e}^T.
\end{align*}
We obtain
\begin{align*}
\textbf{d}^T\Delta\textbf{d}&=(\tilde{\textbf{d}}^T+2\textbf{g}^T-\textbf{h}^T)\Delta
(\tilde{\textbf{d}}+2\textbf{g}-\textbf{h})\\
&=\tilde{\textbf{d}}^T\Delta\tilde{\textbf{d}}+4\textbf{g}^T\Delta\textbf{g}+\textbf{h}^T\Delta
\textbf{h}+4\tilde{\textbf{d}}^T\Delta\textbf{g}-2\tilde{\textbf{d}}^T\Delta\textbf{h}
-4\textbf{g}^T\Delta\textbf{h}.
\end{align*}
Observe that
\begin{align*}
&\bullet&
\tilde{\textbf{d}}^T\Delta\tilde{\textbf{d}}&=
\sum_{i,j=1}^k{\textbf{d}^{(i)}}^T\Delta^{(ij)}\textbf{d}^{(j)}\\
&&&=\sum_{i=1}^k{\textbf{d}^{(i)}}^T\Delta^{(i)}\textbf{d}^{(i)}+\sum_{i\neq j}{\textbf{d}^{(i)}}^T(\Delta^{(i)}\textbf{e}_1\textbf{e}^T+\textbf{e}\textbf{e}_1^T\Delta^{(j)}+|i-j|\textbf{e}\textbf{e}^T)\textbf{d}^{(j)}\\
&&&=\sum_{i=1}^k 4m_i\kappa(T_i)
+
\sum_{i\neq j}(2\mu_im_j+2\mu_jm_i+4m_im_j|i-j|)\\
&&&=\sum_{i=1}^k4m_i\kappa(T_i)
+
4\sum_{i\neq j}\mu_im_j+4\sum_{i,j=1}^km_im_j|i-j|\\
&&&=\sum_{i=1}^k4m_i\kappa(T_i)
+
4(\textbf{e}^T{\bmu}\textbf{m}^T\textbf{e}-
{\bmu}^T\textbf{m})+4\textbf{m}^TX\textbf{m}\\
&&&=\sum_{i=1}^k4m_i\kappa(T_i)
+
4(n-k){\bmu}^T\textbf{e}-4{\bmu}^T\textbf{m}+4\textbf{m}^TX\textbf{m}.
\\
&\bullet& \textbf{g}^T\Delta\textbf{g}&=\sum_{i,j=1}^k\textbf{e}_1^T\Delta^{(ij)}\textbf{e}_1=
\sum_{i\neq j}\textbf{e}_1^T(\Delta^{(i)}\textbf{e}_1\textbf{e}^T+\textbf{e}\textbf{e}_1^T\Delta^{(j)}+|i-j|\textbf{e}\textbf{e}^T)\textbf{e}_1\\&&&=\sum_{i\neq j}|i-j|=\frac{k^3-k}{3}.\\
&\bullet&
\textbf{h}^T\Delta\textbf{h}&=\textbf{e}_1^T\Delta^{(11)}\textbf{e}_1
+\textbf{e}_1^T\Delta^{(kk)}\textbf{e}_1+2\textbf{e}_1^T\Delta^{(1k)}\textbf{e}_1\\
&&&=
2\textbf{e}_1^T(\Delta^{(1)}\textbf{e}_1\textbf{e}^T+\textbf{e}\textbf{e}_1^T\Delta^{(k)}+|1-k|\textbf{e}\textbf{e}^T)\textbf{e}_1=2|1-k|=2(k-1).\\
&\bullet&
\tilde{\textbf{d}}^T\Delta\textbf{g}&=\sum_{i,j=1}^k {\textbf{d}^{(i)}}^T\Delta^{(ij)}\textbf{e}_1\\
&&&=\sum_{i=1}^k{\textbf{d}^{(i)}}^T\Delta^{(i)}\textbf{e}_1
+\sum_{i\neq j}{\textbf{d}^{(i)}}^T\Delta^{(ij)}\textbf{e}_1\\
&&&=
\sum_{i=1}^k\mu_i
+\sum_{i\neq j}{\textbf{d}^{(i)}}^T(\Delta^{(i)}\textbf{e}_1\textbf{e}^T+\textbf{e}\textbf{e}_1^T\Delta^{(j)}+|i-j|\textbf{e}\textbf{e}^T)\textbf{e}_1\\
&&&={\bmu}^T\textbf{e}+\sum_{i\neq j}(\mu_i+2m_i|i-j|)\\
&&&={\bmu}^T\textbf{e}+(k-1){\bmu}^T\textbf{e}+2\sum_{i,j=1}^km_i|i-j|\\
&&&=k{\bmu}^T\textbf{e}+2\textbf{m}^TX\textbf{e}.\\
&\bullet&
\tilde{\textbf{d}}^T\Delta\textbf{h}&=
\sum_{i=1}^k\left({\textbf{d}^{(i)}}^T\Delta^{(i1)}\textbf{e}_1
+{\textbf{d}^{(i)}}^T\Delta^{(ik)}\textbf{e}_1\right)\\
&&&={\textbf{d}^{(1)}}^T\Delta^{(1)}\textbf{e}_1+\sum_{i=2}^k{\textbf{d}^{(i)}}^T\Delta^{(i1)}\textbf{e}_1
+{\textbf{d}^{(k)}}^T\Delta^{(k)}\textbf{e}_1+
\sum_{i=1}^{k-1}{\textbf{d}^{(i)}}^T\Delta^{(ik)}\textbf{e}_1\\
&&&=\mu_1+\mu_k+\sum_{i=2}^k{\textbf{d}^{(i)}}^T(\Delta^{(i)}\textbf{e}_1\textbf{e}^T+
\textbf{e}\textbf{e}_1^T\Delta^{(1)}+(i-1)\textbf{e}\textbf{e}^T)\textbf{e}_1\\
&&&+
\sum_{i=1}^{k-1}{\textbf{d}^{(i)}}^T(\Delta^{(i)}\textbf{e}_1\textbf{e}^T+\textbf{e}\textbf{e}_1^T\Delta^{(k)}
+(k-i)\textbf{e}\textbf{e}^T)\textbf{e}_1\\
&&&=\mu_1+\mu_k+\sum_{i=2}^{k}(\mu_i+2m_i(i-1))+\sum_{i=1}^{k-1}(\mu_i+2m_i(k-i))\\
&&&=2{\bmu}^T\textbf{e}+\sum_{i=2}^{k-1}2m_i(k-1)+2m_k(k-1)+2m_1(k-1)\\
&&&=2{\bmu}^T\textbf{e}+2(k-1)\sum_{i=1}^km_i\\
&&&=2{\bmu}^T\textbf{e}+2(k-1)(n-k).\\
&\bullet&
\textbf{g}^T\Delta\textbf{h}&=\sum_{i=1}^k\left(\textbf{e}_1^T\Delta^{(i1)}\textbf{e}_1
+\textbf{e}_1^T\Delta^{(ik)}\textbf{e}_1\right)\\
&&&=\textbf{e}_1^T\Delta^{(1)}\textbf{e}_1+\sum_{i=2}^k\textbf{e}_1^T\Delta^{(i1)}\textbf{e}_1
+\textbf{e}_1^T\Delta^{(k)}\textbf{e}_1+
\sum_{i=1}^{k-1}\textbf{e}_1^T\Delta^{(ik)}\textbf{e}_1\\
&&&=
\sum_{i=2}^k\textbf{e}_1^T(\Delta^{(i)}\textbf{e}_1\textbf{e}^T+\textbf{e}\textbf{e}_1^T\Delta^{(1)}
+(i-1)\textbf{e}\textbf{e}^T)\textbf{e}_1\\
&&&+\sum_{i=1}^{k-1}\textbf{e}_1^T(\Delta^{(i)}\textbf{e}_1\textbf{e}^T+\textbf{e}\textbf{e}_1^T\Delta^{(k)}+(k-i)\textbf{e}\textbf{e}^T)\textbf{e}_1\\
&&&=\sum_{i=2}^k(i-1)+\sum_{i=1}^{k-1}(k-i)\\
&&&=\sum_{i=1}^{k}(i-1)+\sum_{i=1}^k(k-i)\\
&&&=\sum_{i=1}^k(k-1)\\
&&&=k^2-k.
\end{align*}
Putting these observations all together, we obtain
\begin{align*}
&\textbf{d}^T\Delta\textbf{d}=\sum_{i=1}^k4m_i\kappa(T_i)
+
4(n-k){\bmu}^T\textbf{e}-4{\bmu}^T\textbf{m}+4\textbf{m}^TX\textbf{m}+\frac{4}{3}(k^3-k)\\
&\hspace{1.078cm}+2(k-1)+4k{\bmu}^T\textbf{e}+8\textbf{m}^TX\textbf{e}-4{\bmu}^T\textbf{e}-4(k-1)(n-k)-4k^2+4k\\
&=\sum_{i=1}^k4m_i\kappa(T_i)
+
4\textbf{m}^TX(\textbf{m}+2\textbf{e})+4(n-1){\bmu}^T\textbf{e}-4{\bmu}^T\textbf{m}
+\frac{4}{3}k^3-4kn+4n+\frac{2}{3}k-2.
\end{align*}
Applying \eqref{eq_kemeny_constant_tree_1723_05_nov} yields the desired expression for $\kappa(T)$.
\end{proof}
A first, straightforward application of Proposition \ref{prop_kemeny_recursive_22_oct_2019} allows us to express  Kemeny's constant of a tree $T$ in terms of that of $T-v$ where $v$ is a pendent vertex. 
\begin{corollary}
\label{cor_kemeny_pendent_vertex}
Let $T$ be a nontrivial tree of order $n$, let $v$ be a pendent vertex and let $w$ be the neighbor of $v$. Then
\begin{equation*}
\kappa(T)=\frac{1}{n-1}\left((n-2)\kappa(T-v)+\mu(T-v,w)+n-\frac{3}{2}\right).
\end{equation*}
\end{corollary}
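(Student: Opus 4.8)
The plan is to apply Proposition~\ref{prop_kemeny_recursive_22_oct_2019} with the simplest possible concatenation: write $T = \conc((T-v, w), (\{v\}, v))$, i.e., take $k=2$, let $(T_1, r_1) = (T-v, w)$ be the tree $T-v$ rooted at the neighbor $w$ of $v$, and let $(T_2, r_2)$ be the trivial tree consisting of the single vertex $v$. Since adding the edge $r_1 r_2 = wv$ to the disjoint union of $T-v$ and $\{v\}$ recovers exactly $T$, this is a legitimate instance of the concatenation construction, and $T$ is nontrivial by hypothesis.

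Next I would read off the values of all the quantities appearing in the formula for this specific $k=2$ case. We have $n_1 = n-1$, $n_2 = 1$, so $m_1 = n-2$ and $m_2 = 0$; thus $\textbf{m} = (n-2, 0)$. The Kemeny constants are $\kappa(T_1) = \kappa(T-v)$ and $\kappa(T_2) = 0$ (Kemeny's constant of the trivial tree, by convention). The moments are $\mu_1 = \mu(T-v, w)$ and $\mu_2 = \mu(\{v\}, v) = 0$ (the trivial tree has moment $0$), so $\bmu = (\mu(T-v,w), 0)$. The Toeplitz matrix is $X = \mtx{0 & 1 \\ 1 & 0}$. Then I compute term by term: $\sum_{i=1}^2 m_i \kappa(T_i) = (n-2)\kappa(T-v)$; $\textbf{m}^T X(\textbf{m} + 2\textbf{e}) = (n-2, 0) X (n, 2)^T = (n-2)\cdot 2 = 2(n-2)$; $(n-1)\bmu^T\textbf{e} = (n-1)\mu(T-v,w)$; $\bmu^T \textbf{m} = (n-2)\mu(T-v,w)$; and the constant piece is $\frac{k^3}{3} - kn + n + \frac{k}{6} - \frac{1}{2} = \frac{8}{3} - 2n + n + \frac{1}{3} - \frac{1}{2} = 3 - n - \frac{1}{2} = \frac{5}{2} - n$.

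Finally I would assemble these: inside the parentheses of Proposition~\ref{prop_kemeny_recursive_22_oct_2019} we get
\[
(n-2)\kappa(T-v) + 2(n-2) + (n-1)\mu(T-v,w) - (n-2)\mu(T-v,w) + \tfrac{5}{2} - n.
\]
The moment terms combine to $\mu(T-v,w)$, and $2(n-2) + \tfrac52 - n = n - \tfrac32$, giving
\[
\kappa(T) = \frac{1}{n-1}\left((n-2)\kappa(T-v) + \mu(T-v,w) + n - \tfrac{3}{2}\right),
\]
which is exactly the claimed identity. There is no real obstacle here: the only thing to be careful about is handling the degenerate case $n = 2$ (where $T-v$ is the trivial tree, $\mu(T-v,w) = 0$, and the formula should give $\kappa(T) = \kappa(K_2) = 1$), and confirming that Proposition~\ref{prop_kemeny_recursive_22_oct_2019} indeed permits one of the constituent rooted trees to be trivial — which it does, since its proof only assumed $k \ge 2$ (or handled $k=1$ separately) and never required the $T_\ell$ to be nontrivial. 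The main ``work'' is just the bookkeeping of substituting small values and simplifying, so I would present it compactly rather than belaboring each arithmetic step.
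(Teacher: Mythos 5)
Your proof is correct and is essentially the paper's own argument: the paper writes $T$ as the concatenation of the trivial tree $(\{v\},v)$ and $(T-v,w)$ in the opposite order, which changes nothing since $X$ and the formula are invariant under reversing the ordering, and all your term-by-term evaluations match the paper's. One small slip in your aside: $\kappa(K_2)=\tfrac12$, not $1$ (the transition matrix has eigenvalues $\pm 1$, so $\kappa=\tfrac{1}{1-(-1)}=\tfrac12$), so your $n=2$ sanity check in fact succeeds --- the formula yields $\tfrac{1}{1}\left(0+0+2-\tfrac32\right)=\tfrac12$.
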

\begin{proof}
Consider the trees $T_1$ and $T_2$, where $T_1$ is the trivial tree on vertex set $\{v\}$ and $T_2=T-v$, and notice that $T$ is the concatenation of the rooted trees $(T_1,v)$ and $(T_2,w)$. Denote the size vector and the moment vector of the concatenation by $\textbf{m}=(m_1,m_2)$ and ${\bmu}=(\mu_1,\mu_2)$, respectively, and notice that $m_1=0$, $m_2=n-2$, $\mu_1=0$ and $\mu_2=\mu(T-v,w)$. Using Proposition \ref{prop_kemeny_recursive_22_oct_2019}, we obtain
\begin{small}
\begin{align*}
\kappa(T)&=\frac{1}{n-1}\left({\displaystyle\sum_{i=1}^2m_i}\kappa(T_i)
+\textbf{m}^TX(\textbf{m}+2\textbf{e})+(n-1){\bmu}^T\textbf{e}
-{\bmu}^T\textbf{m}
+\frac{8}{3}-2n+n+\frac{1}{3}-\frac{1}{2}\right)\notag\\
&=\frac{1}{n-1}\left((n-2)\kappa(T_2)
+\textbf{m}^TX(\textbf{m}+2\textbf{e})+(n-1){\bmu}^T\textbf{e}
-{\bmu}^T\textbf{m}
-n+\frac{5}{2}\right).
\end{align*}
\end{small}
The result follows by observing that
\begin{equation*}
\textbf{m}^TX(\textbf{m}+2\textbf{e})=
\begin{bmatrix}
0 &
n-2
\end{bmatrix}
\begin{bmatrix}
0 & 1 \\
1 & 0
\end{bmatrix}
\begin{bmatrix}
2 \\
n
\end{bmatrix}
=2n-4
\end{equation*}
and
\begin{equation*}
(n-1){\bmu}^T\textbf{e}-{\bmu}^T\textbf{m}
={\bmu}^T((n-1)\textbf{e}-\textbf{m})
=
\begin{bmatrix}
0 & \mu(T-v,w)
\end{bmatrix}
\begin{bmatrix}
n-1 \\
1
\end{bmatrix}
=
\mu(T-v,w).\qedhere
\end{equation*}
\end{proof}
\section{Kemeny's constant for caterpillars}
\label{sec_extremal_caterpillars}
Given a positive integer $k$ and a nonnegative integer vector $\textbf{p}=(p_1, p_2, \ldots,p_k)$, we define the \textit{caterpillar} $C_k(\textbf{p})$ to be the tree consisting of a path $c_1 c_2 \dots c_k$ of $k$ vertices, called the \textit{central vertices}, and $p_i$ pendent vertices attached to $c_i$ for each $i\leq k$ (Figure \ref{fig_caterpillar}). Observe that the order of $C_k(\textbf{p})$ is $n=k+\sum_{i=1}^kp_i$. 
\begin{figure}[h]
\centering
\begin{tikzpicture}[scale=.7]
%
\draw [fill] (0,0) circle (0.1cm);
	\draw [fill] (0,-2) circle (0.1cm);
\draw [fill] (3,0) circle (0.1cm);
	\draw[fill]  (3,-2) circle (0.1cm);
	\draw[fill] (3-1,-1.7321) circle (0.1cm);
	\draw[fill] (3+1,-1.7321) circle (0.1cm);
\draw [fill] (6,0) circle (0.1cm);
\draw [fill] (9,0) circle (0.1cm);
	\draw[fill]  (9,-2) circle (0.1cm);
	\draw[fill] (9-1,-1.7321) circle (0.1cm);
	\draw[fill] (9+1,-1.7321) circle (0.1cm);
	\draw[fill] (9+0.5176,-1.9319) circle (0.1cm);
	\draw[fill] (9-0.5176,-1.9319) circle (0.1cm);
\draw [fill] (12,0) circle (0.1cm);
\draw [fill] (15,0) circle (0.1cm);
	\draw[fill] (15-1,-1.7321) circle (0.1cm);
	\draw[fill] (15+1,-1.7321) circle (0.1cm);
\draw[black] (0,0) -- (3,0) -- (6,0) -- (9,0) -- (12,0) -- (15,0);
\draw[black] (0,0) -- (0,-2);
\draw[black] (3,0) -- (3,-2);
\draw[black] (3,0) -- (3-1,-1.7321);
\draw[black] (3,0) -- (3+1,-1.7321);
\draw[black] (9,0) -- (9,-2);
\draw[black] (9,0) -- (9-1,-1.7321);
\draw[black] (9,0) -- (9+1,-1.7321);
\draw[black] (9,0) -- (9+0.5176,-1.9319);
\draw[black] (9,0) -- (9-0.5176,-1.9319);
\draw[black] (15,0) -- (15-1,-1.7321);
\draw[black] (15,0) -- (15+1,-1.7321);
\end{tikzpicture}
\caption{\footnotesize{The caterpillar $C_6(\textbf{p})$ with $\textbf{p}=(1,3,0,5,0,2)$.}}
\label{fig_caterpillar}
\end{figure}

From Proposition \ref{prop_kemeny_recursive_22_oct_2019} one can obtain a particularly simple expression for  Kemeny's constant of a caterpillar (Proposition \ref{kemeny_caterpillars_1419_23_oct_2019}). This expression will be used to find the minimizer and the maximizer of  Kemeny's constant in the class of caterpillars having a given order and a given number of central vertices (Theorem \ref{theorem_max_min_kemeny_caterpillars_1613_18apr}). In Section \ref{sec_lower_bound_kememy}, this will lead to a sharp lower bound for Kemeny's constant of trees having fixed order and diameter.
\begin{proposition}
\label{kemeny_caterpillars_1419_23_oct_2019}
Let $C_k(\textbf{p})$ be a caterpillar of order $n\geq 2$. Then
\begin{align*}
\kappa(C_k(\textbf{p}))=\frac{1}{n-1}\left(\textbf{p}^TX\textbf{p}+2\textbf{p}^TX\textbf{e}+\frac{k^3}{3}-2nk+n^2-\frac{n}{2}+\frac{5}{3}k-\frac{1}{2}\right).
\end{align*}
\end{proposition}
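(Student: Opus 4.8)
The plan is to apply Proposition~\ref{prop_kemeny_recursive_22_oct_2019} directly to the caterpillar $C_k(\textbf{p})$, viewed as a concatenation of $k$ rooted stars. Specifically, I would write $C_k(\textbf{p}) = \conc((T_1,r_1),\dots,(T_k,r_k))$, where for each $\ell$ the tree $(T_\ell,r_\ell)$ is the star $S(p_\ell+1)$ rooted at its central vertex $c_\ell$ (with the convention that $S(1)$ is the trivial tree when $p_\ell=0$). The first step is to record the relevant quantities for each piece: $n_\ell = p_\ell+1$, $m_\ell = p_\ell$, and, since $T_\ell$ is a star rooted at its center, $\kappa(T_\ell)=0$ (Kemeny's constant of a star on at most two vertices is $0$, and for $p_\ell\ge 2$ one checks directly from \eqref{eq_kemeny_constant_tree_1723_05_nov}, or simply notes that $m_\ell\kappa(T_\ell)$ will be absorbed — actually I must be careful: $\kappa(S(p_\ell+1))$ is generally nonzero, but by Lemma~\ref{lemma_moment_star} the moment is $\mu_\ell = p_\ell = m_\ell$; the key simplification is that the $\sum m_i\kappa(T_i)$ term needs handling). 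Let me restart this: the cleanest route is to note $\mu_\ell = \mu(S(p_\ell+1),\text{center}) = p_\ell$ by Lemma~\ref{lemma_moment_star}, so $\bmu = \textbf{p}$ and $\textbf{m}=\textbf{p}$.

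The second step is the bookkeeping on the term $\sum_{i=1}^k m_i\kappa(T_i)$. Here I would use Corollary~\ref{cor_kemeny_pendent_vertex} or a direct computation: for a star $S(p+1)$ with $p\geq 1$, one has $\textbf{d} = (p,1,1,\dots,1)$ and the distance matrix has $\Delta_{1j}=1$ for the center, $\Delta_{ij}=2$ for two distinct leaves; so $\textbf{d}^T\Delta\textbf{d} = 2p\cdot p\cdot 1 + 2\cdot\binom{p}{2}\cdot 2 = 2p^2 + 2p(p-1) = 4p^2-2p$, whence $\kappa(S(p+1)) = \frac{4p^2-2p}{4p} = p - \tfrac12$ for $p\geq 1$, and $\kappa(S(1))=0$. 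Thus $m_\ell\kappa(T_\ell) = p_\ell(p_\ell-\tfrac12)$ for all $\ell$ (this also holds when $p_\ell=0$), so $\sum_{i=1}^k m_i\kappa(T_i) = \sum_i p_i^2 - \tfrac12\sum_i p_i = \|\textbf{p}\|^2 - \tfrac12(n-k)$, using $\sum p_i = n-k$.

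The third step is to substitute $\textbf{m}=\bmu=\textbf{p}$ and the above into the formula of Proposition~\ref{prop_kemeny_recursive_22_oct_2019}. The moment-vector terms become $(n-1)\bmu^T\textbf{e} - \bmu^T\textbf{m} = (n-1)(n-k) - \|\textbf{p}\|^2$, the $\textbf{m}^TX(\textbf{m}+2\textbf{e})$ term is exactly $\textbf{p}^TX\textbf{p} + 2\textbf{p}^TX\textbf{e}$ (which stays as is, matching the target), and the constant terms $\tfrac{k^3}{3} - kn + n + \tfrac{k}{6} - \tfrac12$ come along for the ride. Collecting everything inside the parenthesis: $\|\textbf{p}\|^2 - \tfrac12(n-k) + \textbf{p}^TX\textbf{p} + 2\textbf{p}^TX\textbf{e} + (n-1)(n-k) - \|\textbf{p}\|^2 + \tfrac{k^3}{3} - kn + n + \tfrac{k}{6} - \tfrac12$. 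The two $\|\textbf{p}\|^2$ terms cancel, and the remaining task is pure algebra: expand $(n-1)(n-k) = n^2 - nk - n + k$, combine $-\tfrac12(n-k) + n - n + k + \tfrac{k}{6} - \tfrac12 = -\tfrac{n}{2} + \tfrac{k}{2} + k + \tfrac{k}{6} - \tfrac12 = -\tfrac{n}{2} + \tfrac{5k}{3} - \tfrac12$, and combine the $nk$ terms: $-nk - kn = -2nk$. This yields exactly $\textbf{p}^TX\textbf{p} + 2\textbf{p}^TX\textbf{e} + \tfrac{k^3}{3} - 2nk + n^2 - \tfrac{n}{2} + \tfrac{5}{3}k - \tfrac12$, as claimed. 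The case $k=1$ (a single star) should be checked separately or noted to follow from the same computation once one verifies Proposition~\ref{prop_kemeny_recursive_22_oct_2019} covers it.

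The main obstacle is not conceptual but a matter of getting the arithmetic of the constant terms exactly right, and of correctly justifying $\kappa(S(p+1)) = p - \tfrac12$ for $p \ge 1$ together with the (slightly delicate) fact that the combined term $m_\ell \kappa(T_\ell) = p_\ell^2 - \tfrac12 p_\ell$ remains valid at $p_\ell = 0$, so that no case distinction among the $p_\ell$ is needed. A secondary point requiring care is the degenerate piece: when $p_\ell = 0$, $T_\ell$ is the trivial tree, and one must confirm that the concatenation still produces $C_k(\textbf{p})$ and that Proposition~\ref{prop_kemeny_recursive_22_oct_2019} applies with $n_\ell = 1$, $m_\ell = 0$, $\mu_\ell = 0$ — all consistent with $\textbf{m} = \bmu = \textbf{p}$.
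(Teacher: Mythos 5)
Your proposal is correct and follows essentially the same route as the paper's proof: decompose $C_k(\textbf{p})$ as the concatenation of rooted stars, use Lemma \ref{lemma_moment_star} to get $\bmu=\textbf{m}=\textbf{p}$, compute $m_\ell\kappa(S(p_\ell+1))=p_\ell^2-\tfrac{p_\ell}{2}$ directly from \eqref{eq_kemeny_constant_tree_1723_05_nov} (valid also at $p_\ell=0$), and collect terms in Proposition \ref{prop_kemeny_recursive_22_oct_2019}. The arithmetic in your final collection matches the stated formula.
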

\begin{proof}
We notice that $C_k(\textbf{p})$ is the concatenation of the rooted stars $(S(p_1+1),r_1),(S(p_2+1),r_2),\dots,(S(p_k+1),r_k)$, where the root $r_i$ is the central vertex of the corresponding star. From Proposition \ref{prop_kemeny_recursive_22_oct_2019} we have that
\begin{align*}
\resizebox{1\hsize}{!}{$\displaystyle
\kappa(C_k(\textbf{p}))=\frac{1}{n-1}\left(\sum_{i=1}^kp_i\kappa(S(p_i+1))
+\textbf{p}^TX(\textbf{p}+2\textbf{e})+(n-1){\bmu}^T\textbf{e}
-{\bmu}^T\textbf{p}
+\frac{k^3}{3}-kn+n+\frac{k}{6}-\frac{1}{2}\right).
$}
\end{align*}
We claim that 
\begin{equation}
\label{eqn_pik_star_1845_7nov}
p_i\kappa(S(p_i+1))=p_i^2-\frac{p_i}{2}\hspace{.8cm}(i=1,2,\dots,k).
\end{equation}
If $p_i=0$, equation \eqref{eqn_pik_star_1845_7nov} trivially holds. If $p_i\geq 1$, notice that the degree vector $\textbf{d}^{(i)}$ and the distance matrix $\Delta^{(i)}$ of the star $S(p_i+1)$ have the following simple description:
\begin{align*}
\textbf{d}^{(i)}=
\begin{bmatrix}
p_i\\
\textbf{e}
\end{bmatrix},
\hspace{.8cm}
\Delta^{(i)}=
\begin{bmatrix}
0 & \textbf{e}^T\\
\textbf{e} & 2(\textbf{e}\textbf{e}^T-I)
\end{bmatrix}
\end{align*}
where $\textbf{e}\in\amsmathbb{R}^{p_i}$ and $I$ is the identity matrix in $M_{p_i}$. Hence,
\begin{align*}
p_i\kappa(S(p_i+1))&=
p_i\frac{{\textbf{d}^{(i)}}^T\Delta^{(i)}\textbf{d}^{(i)}}{4p_i}=
\frac{1}{4}
\begin{bmatrix}
p_i & \textbf{e}^T
\end{bmatrix}
\begin{bmatrix}
0 & \textbf{e}^T\\
\textbf{e} & 2(\textbf{e}\textbf{e}^T-I)
\end{bmatrix}
\begin{bmatrix}
p_i\\
\textbf{e}
\end{bmatrix}\\
&
=
\frac{1}{4}
\begin{bmatrix}
p_i & \textbf{e}^T
\end{bmatrix}
\begin{bmatrix}
p_i\\
(3p_i-2)\textbf{e}
\end{bmatrix}
=
\frac{4p_i^2-2p_i}{4}=
p_i^2-\frac{p_i}{2}.
\end{align*}
Moreover, from Lemma \ref{lemma_moment_star} we see that $\mu_i=p_i$ for $i=1,2,\dots,k$, so that ${\bmu}=\textbf{p}$. We obtain
\begin{small}
\begin{align*}
&\kappa(C_k(\textbf{p}))
=
\frac{1}{n-1}\left(\sum_{i=1}^k\left(p_i^2-\frac{p_i}{2}\right)
+\textbf{p}^TX(\textbf{p}+2\textbf{e})+(n-1)\textbf{p}^T\textbf{e}
-\textbf{p}^T\textbf{p}
+\frac{k^3}{3}-kn+n+\frac{k}{6}-\frac{1}{2}\right)\\
&=\frac{1}{n-1}\left(\textbf{p}^T\textbf{p}-\frac{n-k}{2}+\textbf{p}^TX\textbf{p}+2\textbf{p}^TX\textbf{e}+(n-1)(n-k)
-\textbf{p}^T\textbf{p}+\frac{k^3}{3}-kn+n+\frac{k}{6}-\frac{1}{2}\right)\\
&=\frac{1}{n-1}\left(\textbf{p}^TX\textbf{p}+2\textbf{p}^TX\textbf{e}
+\frac{k^3}{3}-2nk+n^2-\frac{n}{2}+\frac{5}{3}k-\frac{1}{2}
\right)
\end{align*}
\end{small}
as desired.
\end{proof}

Let $\mc{C}_{n,k}$ denote the class of caterpillars $C_k(\textbf{p})$ having $k$ central vertices and order $n$, so that
\[
    n=|V(C_k(\textbf{p}))|=k+\sum_{i=1}^k p_i.
\]
The goal of the remaining part of this section is to find the maximum and the minimum value of  Kemeny's constant in the class $\mc{C}_{n,k}$. In particular, we will prove the following result.
\begin{theorem}
\label{theorem_max_min_kemeny_caterpillars_1613_18apr}
Let $k$ and $n$ be integers such that $2\leq k\leq n$.
\begin{enumerate}
\item[$(i)$]
The unique caterpillar in $\mathcal{C}_{n,k}$ minimizing  Kemeny's constant is $C_k((n-k)\textbf{e}_{r})$, with $r=\lceil{\frac{k}{2}}\rceil$.
\item[$(ii)$]
The unique caterpillar in $\mathcal{C}_{n,k}$ maximizing Kemeny's constant is $C_k(t_1\textbf{e}_1+t_k\textbf{e}_k)$, with $t_1=\lceil{\frac{n-k}{2}}\rceil$, $t_k=\lfloor{\frac{n-k}{2}}\rfloor$.
\end{enumerate}
\end{theorem}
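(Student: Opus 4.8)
The plan is to reduce both optimization problems to the behavior of the quadratic form $\textbf{p}^TX\textbf{p}+2\textbf{p}^TX\textbf{e}$ on the discrete simplex $\{\textbf{p}\in\amsmathbb{Z}_{\geq 0}^k : \sum_i p_i = n-k\}$, since by Proposition \ref{kemeny_caterpillars_1419_23_oct_2019} all the other terms in $\kappa(C_k(\textbf{p}))$ depend only on $n$ and $k$, which are fixed. Thus minimizing (resp.\ maximizing) Kemeny's constant over $\mc{C}_{n,k}$ is equivalent to minimizing (resp.\ maximizing) the function $f(\textbf{p}) = \textbf{p}^TX\textbf{p}+2\textbf{p}^TX\textbf{e}$ over that discrete simplex, where $X=[|i-j|]$. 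I would first record a clean combinatorial reading of $f$: writing $s=n-k$ for the total pendent count, one has $\textbf{p}^TX\textbf{p} = \sum_{i,j}|i-j|p_ip_j$, which counts (twice, with multiplicity) weighted distances between pendent vertices sitting at central positions, and $2\textbf{p}^TX\textbf{e} = 2\sum_i p_i \sum_j |i-j|$, a term measuring how far the mass $\textbf{p}$ is, on average, from the two ends of the spine. I would compute $X\textbf{e}$ explicitly: its $i$-th entry is $\binom{i-1}{2}$-type sum, namely $(i-1)i/2 + (k-i)(k-i+1)/2$, a convex function of $i$ minimized at the center $i=\lceil k/2\rceil$ and maximized at the endpoints $i\in\{1,k\}$.

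For part $(i)$ (the minimizer), the strategy is a swapping/exchange argument. Suppose $\textbf{p}$ is optimal but has pendent vertices at two distinct positions, or at a single position $r\neq\lceil k/2\rceil$. I would show that moving one pendent vertex from position $a$ to an adjacent position $a\pm 1$ in the direction of the center strictly decreases $f$, by writing $f(\textbf{p}')-f(\textbf{p})$ as a telescoping expression in the entries of $X$ and of $X\textbf{e}$ — concretely, moving a unit of mass from $a$ toward the center changes $\textbf{p}^TX\textbf{e}$ by a nonpositive amount (convexity of $(X\textbf{e})_i$) and changes $\textbf{p}^TX\textbf{p}$ by $2\sum_{j}(|a{-}1-j| - |a-j|)p_j$, which one controls by induction on the number of ``spread-out'' configurations. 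The cleanest route is probably: (a) show every minimizer must be concentrated at a single central vertex $c$ (if mass sits at two positions $a<b$, pushing the two blocks toward each other strictly lowers $\textbf{p}^TX\textbf{p}$ while not raising $2\textbf{p}^TX\textbf{e}$ once they are on the same side of the center, and a short case analysis handles the straddling case); then (b) among single-vertex configurations $C_k(s\textbf{e}_c)$, $f$ reduces to $s^2\cdot 0 + 2s(X\textbf{e})_c = 2s(X\textbf{e})_c$, which is uniquely minimized at $c=\lceil k/2\rceil$ by the strict convexity computation, taking care of the tie-breaking when $k$ is even (one checks $(X\textbf{e})_{k/2} < (X\textbf{e})_{k/2+1}$ using the explicit formula, so the ceiling is the right choice).

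For part $(ii)$ (the maximizer), the same exchange philosophy runs in reverse: one expects mass to be pushed to the two endpoints. I would argue that any interior mass can be strictly improved by moving it outward — moving a unit from position $a$ to position $a-1$ increases $2\textbf{p}^TX\textbf{e}$ by $2((X\textbf{e})_{a-1}-(X\textbf{e})_a)$, and once all mass is split between positions $1$ and $k$ the cross term $\textbf{p}^TX\textbf{p}$ equals $2(k-1)t_1t_k$ where $t_1+t_k=s$, which is maximized when $t_1,t_k$ are as equal as possible, i.e.\ $\{\lceil s/2\rceil, \lfloor s/2\rfloor\}$; meanwhile $2\textbf{p}^TX\textbf{e}$ on such configurations equals $2(k-1)(t_1+t_k)=2(k-1)s$, independent of the split, so the split is determined solely by the cross term. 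The remaining subtlety is to rule out that splitting among more than two positions, or using interior positions, could beat the two-endpoint configuration; here I would use that $(X\textbf{e})_i$ is strictly maximized only at $i\in\{1,k\}$ together with the fact that for a fixed multiset of positions the cross term is also maximized by pushing blocks apart, so an outward-monotone exchange argument terminates at the claimed configuration, with uniqueness following from the strictness of each improving step.

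The main obstacle I anticipate is the straddling case in the exchange arguments — when mass sits on both sides of the center (for $(i)$) or when an improving move would push mass across the midpoint (for $(ii)$) — since there the two competing terms $\textbf{p}^TX\textbf{p}$ and $2\textbf{p}^TX\textbf{e}$ can pull in opposite directions, and a naive ``move one unit'' step need not be monotone. I expect this is resolved by moving a whole block at once, or by a more careful accounting that pairs the change in the cross term against the change in the linear term using the explicit convex formula for $(X\textbf{e})_i$; getting the inequality signs exactly right (and hence the strictness needed for uniqueness, including the parity-dependent tie-breaks giving the ceilings and floors) is the delicate part of the proof.
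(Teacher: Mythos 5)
Your reduction to optimizing $h(\textbf{p})=\textbf{p}^TX\textbf{p}+2\textbf{p}^TX\textbf{e}$ over $\{\textbf{p}\in \amsmathbb{Z}^k \mid \textbf{p}\geq \bzero,\ \textbf{p}^T\textbf{e}=n-k\}$, and your explicit convex formula for $(X\textbf{e})_i$, match the paper exactly. For part $(i)$, however, you are working harder than necessary and in doing so leave a gap: your exchange step (a) founders on precisely the straddling case you flag, but no exchange argument is needed at all. Since $X$ has zero diagonal and strictly positive off-diagonal entries, $\textbf{p}^TX\textbf{p}\geq 0$ with equality exactly when $\textbf{p}$ is concentrated at a single position, while the linear term $2\textbf{p}^TX\textbf{e}$ is separately minimized by placing all mass on a position minimizing $(X\textbf{e})_i$. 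The two summands are therefore simultaneously minimized at the concentrated central configuration, which settles existence and uniqueness in one stroke. Also, your tie-break for even $k$ is wrong: $(X\textbf{e})_i$ is a quadratic in $i$ with vertex at $(k+1)/2$, so $(X\textbf{e})_{k/2}=(X\textbf{e})_{k/2+1}$ (not $<$); there are two minimizing vectors, and uniqueness is salvaged because the two caterpillars are isomorphic, not because the ceiling wins numerically.

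For part $(ii)$ the obstacle you anticipate is real and your proposal does not close it. ``Moving mass outward'' is not monotone for $h$: the change in the cross term $\textbf{p}^TX\textbf{p}$ under a unit move from $q$ to $q-1$ can be negative and dominate the gain in the linear term, so an outward-monotone exchange need not make progress, and ``moving a whole block at once'' is not a proof. The paper's resolution is an identity you are missing. Writing $R_q(\textbf{p})=h(\textbf{p}+\textbf{e}_{q+1}-\textbf{e}_q)-h(\textbf{p})$ and $L_q(\textbf{p})=h(\textbf{p}+\textbf{e}_{q-1}-\textbf{e}_q)-h(\textbf{p})$, a direct computation gives $R_q(\textbf{p})=4s_q(\textbf{p})+4q-2n-2$ (with $s_q$ the partial sum of $\textbf{p}$) and hence $L_q(\textbf{p})=-R_q(\textbf{p})+4p_q$. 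At a maximizer with $p_q\geq 1$ for some interior $q$, both perturbations must be $\leq 0$, yet their sum equals $4p_q>0$ --- a contradiction that forces all mass onto positions $1$ and $k$ with no monotonicity claim whatsoever. Your endgame (cross term $2(k-1)t_1t_k$ maximized at the balanced split, linear term constant on endpoint configurations) is correct and agrees with the paper's explicit quadratic in $t$.
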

\medskip
\begin{example}
 \label{ex:cat-min-max}
  Let $k=7$ and $n=15$. Then the caterpillar $C_7(\textbf{p})$ where $\textbf{p}=(0,0,0,8,0,0,0)$ minimizes $\kappa(C)$ for $C \in \mc{C}_{n,k}$, and $C_7(\textbf{p}')$ where $\textbf{p}'=(4,0,0,0,0,0,4)$ maximizes $\kappa(C)$ for the same class. 
\endproof
\end{example}
\begin{proof}[\textbf{Proof of Theorem \ref{theorem_max_min_kemeny_caterpillars_1613_18apr}}]
Consider the two functions $f,g:\amsmathbb{R}^k\rightarrow\amsmathbb{R}$ defined by
$f(\textbf{p})=\textbf{p}^TX\textbf{p}$ and $g(\textbf{p})=2\textbf{p}^TX\textbf{e}$ $(\textbf{p}\in\amsmathbb{R}^k)$. From Proposition \ref{kemeny_caterpillars_1419_23_oct_2019} we see that the problem reduces to minimizing (resp. maximizing) the function $h=f+g$ in the set $K_{n-k}=\{\textbf{p}\in \amsmathbb{Z}^k\mid \textbf{p}\geq \bzero,\; \textbf{p}^T\textbf{e}=n-k\}$. 

$(i)$ The diagonal entries of $X$ are zero, and its off-diagonal entries are strictly positive. Hence, the minimum value of $f$ in $K_{n-k}$ is attained uniquely in $(n-k)\textbf{e}_{i}$ for some $i\in\{1,2,\dots,k\}$. Moreover, a straightforward computation shows that 
\begin{equation}
\label{eq_Xei_1625_6_nov_2019}
(X\textbf{e})_i=i^2-ki-i+\frac{k^2+k}{2}\hspace{.8cm}(i=1,2,\dots,k).
\end{equation}	
If $k$ is odd, expression \eqref{eq_Xei_1625_6_nov_2019} attains its minimum only for $i=\frac{k+1}{2}=\lceil{\frac{k}{2}}\rceil$. Therefore, $g$ and $h$ are minimized uniquely for $\textbf{p}=(n-k)\textbf{e}_{\lceil{\frac{k}{2}}\rceil}$. If $k$ is even, expression \eqref{eq_Xei_1625_6_nov_2019} attains its minimum only for $i_1=\frac{k}{2}$ and $i_2=\frac{k}{2}+1$. Therefore, $h$ is minimized uniquely for $\textbf{p}_1=(n-k)\textbf{e}_{\frac{k}{2}}$ and $\textbf{p}_2=(n-k)\textbf{e}_{\frac{k}{2}+1}$. Since the two caterpillars $C_k(\textbf{p}_1)$ and $C_k(\textbf{p}_2)$ are isomorphic, and since $\frac{k}{2}=\lceil{\frac{k}{2}}\rceil$ for $k$ even, part $(i)$ of the theorem follows.

$(ii)$ Given $\textbf{v}=(v_i)\in\amsmathbb{R}^k$ and $j\in\{1,2,\dots,k\}$, we define $s_j(\textbf{v})=\sum_{i=1}^jv_i$.
Let $\textbf{p}=(p_i)\in K_{n-k}$. We define the \open right perturbation function" $R_q(\textbf{p})=h(\textbf{p}+\textbf{e}_{q+1}-\textbf{e}_q)-h(\textbf{p})$ for $q\in\{1,2,\dots,k-1\}$ with $p_q\geq 1$, and the \open left perturbation function" $L_q(\textbf{p})=h(\textbf{p}+\textbf{e}_{q-1}-\textbf{e}_q)-h(\textbf{p})$ for $q\in\{2,3,\dots,k\}$ with $p_q\geq 1$. Observe that $L_q(\textbf{p})=-R_{q-1}(\textbf{p}+\textbf{e}_{q-1}-\textbf{e}_q)$. Defining $\textbf{y}=\textbf{e}_{q+1}-\textbf{e}_q$, we see that
\begin{align*}
R_q(\textbf{p})&=h(\textbf{p}+\textbf{y})-h(\textbf{p})=f(\textbf{p}+\textbf{y})+g(\textbf{p}+\textbf{y})-f(\textbf{p})-g(\textbf{p})\\
&=f(\textbf{p})+f(\textbf{y})+2\textbf{p}^TX\textbf{y}+g(\textbf{p})
+g(\textbf{y})-f(\textbf{p})-g(\textbf{p})\\
&=f(\textbf{y})+2\textbf{p}^TX\textbf{y}
+g(\textbf{y}).
\end{align*}
Notice that $f(\textbf{y})=-2$. Additionally,
\begin{align*}
(X\textbf{y})_i=
\begin{array}{l}
\left\{\begin{aligned}
1& &\mbox{if }\; i\leq q\\
-1& &\mbox{if }\; i>q
\end{aligned}
\right.
\end{array}
\end{align*}
and, therefore, 
\begin{align*}
\textbf{p}^TX\textbf{y}&=s_q(\textbf{p})-(n-k-s_q(\textbf{p}))=2s_q(\textbf{p})-n+k,\\
g(\textbf{y})&=2\textbf{e}^TX\textbf{y}=2(q-(k-q))=4q-2k.
\end{align*}
We conclude that
\begin{align*}
R_q(\textbf{p})&=4s_q(\textbf{p})+4q-2n-2
\intertext{and, hence,}
L_q(\textbf{p})&=-R_{q-1}(\textbf{p}+\textbf{e}_{q-1}-\textbf{e}_q)\\
&=-4s_{q-1}(\textbf{p}+\textbf{e}_{q-1}-\textbf{e}_q)-4(q-1)+2n+2\\
&=-4s_{q-1}(\textbf{p})-4-4q+4+2n+2\\
&=-4s_q(\textbf{p})+4p_q-4q+2n+2\\
&=-R_q(\textbf{p})+4p_q.
\end{align*}
Suppose now that $\textbf{p}^*=(p_i^*)$ attains the maximum value of $h$ in $K_{n-k}$. If $p_q^*\geq 1$ for some $q\in\{2,3,\dots,k-1\}$, then the maximality of $\textbf{p}^*$ implies that $R_q(\textbf{p}^*)\leq 0$ and $L_q(\textbf{p}^*)\leq 0$. This means that $4p_q^*=R_q(\textbf{p}^*)+L_q(\textbf{p}^*)\leq 0$, which is a contradiction. As a consequence, there exists $t\in\{0,1,\dots,n-k\}$ such that $\textbf{p}^*=t\textbf{e}_1+(n-k-t)\textbf{e}_k$. Then, a straightforward computation yields
\begin{align*}
h(\textbf{p}^*)=t^2(2-2k)+t(2nk-2k^2-2n+2k)+nk^2-k^3-nk+k^2.
\end{align*}
Considering this expression as a quadratic polynomial in the variable $t$, we find that its maximum is attained for $t^*=\frac{n-k}{2}$, from which part $(ii)$ of the theorem readily follows.
\end{proof}

\section{A lower bound on  Kemeny's constant for trees}
\label{sec_lower_bound_kememy}
The results concerning extremal caterpillars presented in Section \ref{sec_extremal_caterpillars} provide a tool for finding a lower bound on Kemeny's constant of a tree in terms of the number of its vertices and its diameter. To do so, we shall prove a stronger version of part $(i)$ of Theorem \ref{theorem_max_min_kemeny_caterpillars_1613_18apr}. Namely, we will show that the caterpillar $C_*$ minimizing  Kemeny's constant among all the caterpillars having $n$ vertices and ${\delta}+1$ central vertices is also a minimizer in the class $\mathcal{T}_{n,{\delta}}$ of trees having $n$ vertices and diameter equal to ${\delta}$ (Theorem \ref{thm_minimization_trees_1116_31may}). $\kappa(C_*)$ will then yield a (sharp) lower bound for  Kemeny's constant in $\mathcal{T}_{n,{\delta}}$  (Corollary \ref{cor_lower_bound_kemeny_1117_31may}). 

Let $T(G)$ be the transition matrix of a nontrivial connected graph $G$ on $n$ vertices, and let $1,\lambda_2,\lambda_3,\dots,\lambda_n$ be the eigenvalues of $T(G)$. From \cite{KemenySnell} we have the following formula for Kemeny's constant of $G$:
\begin{equation*}
\kappa(G)=\sum_{j=2}^n\frac{1}{1-\lambda_j}.
\end{equation*}
We use this expression to obtain a result on Kemeny's constant for bipartite graphs (Proposition \ref{min_kemeny_bipartite_graphs}) and, as a consequence, for trees (Corollary \ref{cor_minimum_kemeny_star_1640_24_oct_2019_NEW_16_dec_2019}). The latter will be useful in the proof of Theorem \ref{thm_minimization_trees_1116_31may}.
\begin{proposition}
\label{min_kemeny_bipartite_graphs}
Let $G$ be a nontrivial connected bipartite graph on $n$ vertices. Then 
\begin{equation*}
\kappa(G)\ge n-\frac{3}{2},
\end{equation*}
with equality if and only if $G$ is a complete bipartite graph.
\end{proposition}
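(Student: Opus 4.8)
The plan is to use the spectral formula $\kappa(G)=\sum_{j=2}^n\frac{1}{1-\lambda_j}$ together with convexity. Since $G$ is bipartite, the transition matrix $T(G)=D^{-1}A$ is similar to the symmetric matrix $D^{-1/2}AD^{-1/2}$, so its eigenvalues $1=\lambda_1\geq\lambda_2\geq\dots\geq\lambda_n$ are real, and bipartiteness forces the spectrum to be symmetric about $0$, so $\lambda_n=-1$. The function $\phi(x)=\frac{1}{1-x}$ is strictly convex on $(-\infty,1)$, so Jensen's inequality applied to $\lambda_2,\dots,\lambda_n$ gives
\[
\kappa(G)=\sum_{j=2}^n\frac{1}{1-\lambda_j}\ge \frac{n-1}{1-\bar\lambda},
\]
where $\bar\lambda=\frac{1}{n-1}\sum_{j=2}^n\lambda_j$. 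Now $\sum_{j=1}^n\lambda_j=\tr(T(G))=0$ (no loops), so $\sum_{j=2}^n\lambda_j=-1$ and hence $\bar\lambda=-\frac{1}{n-1}$. Substituting,
\[
\kappa(G)\ge\frac{n-1}{1+\frac{1}{n-1}}=\frac{(n-1)^2}{n}=n-2+\frac1n.
\]

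This is not quite the claimed bound $n-\frac32$; the discrepancy is that Jensen does not exploit the forced eigenvalue $\lambda_n=-1$. So I would refine the argument: peel off the term $j=n$, which contributes exactly $\frac{1}{1-(-1)}=\frac12$, and apply Jensen only to $\lambda_2,\dots,\lambda_{n-1}$:
\[
\kappa(G)=\frac12+\sum_{j=2}^{n-1}\frac{1}{1-\lambda_j}\ge\frac12+\frac{n-2}{1-\bar\lambda'},
\]
with $\bar\lambda'=\frac{1}{n-2}\sum_{j=2}^{n-1}\lambda_j$. Since $\sum_{j=2}^{n-1}\lambda_j=\Bigl(\sum_{j=2}^n\lambda_j\Bigr)-\lambda_n=-1-(-1)=0$, we get $\bar\lambda'=0$, whence
\[
\kappa(G)\ge\frac12+\frac{n-2}{1}=n-\frac32,
\]
as desired.

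For the equality characterization: strict convexity of $\phi$ means equality in Jensen holds if and only if $\lambda_2=\lambda_3=\dots=\lambda_{n-1}=0$. Combined with $\lambda_1=1$ and $\lambda_n=-1$, this says $T(G)$ has exactly the eigenvalues $1,0,\dots,0,-1$, i.e.\ rank $2$; equivalently $A$ (being congruent to a scaling of $T(G)$ and symmetric) has rank $2$. A connected graph whose adjacency matrix has rank $2$ is a complete bipartite graph, and conversely every complete bipartite graph $K_{s,t}$ has adjacency rank $2$, hence transition eigenvalues $1,0,\dots,0,-1$ and $\kappa=n-\frac32$. I would cite the standard fact that a connected graph with $\rank(A)=2$ is complete bipartite (it follows quickly: rank $2$ forces at most two distinct nonzero rows up to scaling, and for a $0/1$ symmetric matrix with zero diagonal this pins down the bipartite structure), or alternatively verify directly from the spectral data that $D^{-1/2}AD^{-1/2}=\mathbf{u}\mathbf{u}^T-\mathbf{v}\mathbf{v}^T$ for the Perron and anti-Perron vectors and unpack what this means combinatorially.

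The main obstacle is the equality case: the inequality itself is a short Jensen argument once one spots that the $j=n$ term must be separated out, but characterizing when a connected bipartite graph has adjacency rank exactly $2$ — and arguing that this is equivalent to being complete bipartite — requires a small structural lemma. I expect that to be the only place needing genuine care; everything else is bookkeeping with traces and the bipartite spectral symmetry.
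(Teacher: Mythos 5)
Your proof is correct, and it reaches the bound by a slightly different mechanism than the paper. The paper exploits the full spectral symmetry of a bipartite graph: it pairs each eigenvalue $\lambda_j$ with $\lambda_{n-j+1}=-\lambda_j$ and uses the identity $\frac{1}{1-\lambda_j}+\frac{1}{1+\lambda_j}=\frac{2}{1-\lambda_j^2}\ge 2$, summing over pairs and counting the zero eigenvalues separately; equality then visibly forces every paired eigenvalue to vanish. You instead peel off only the term $\lambda_n=-1$, observe that $\operatorname{tr}(T)=0$ forces $\sum_{j=2}^{n-1}\lambda_j=0$, and apply Jensen to the strictly convex map $x\mapsto\frac{1}{1-x}$ on $(-\infty,1)$. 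Both routes are sound and land on the same equality condition (all interior eigenvalues zero, hence $\operatorname{rank}(T)=\operatorname{rank}(A)=2$, hence complete bipartite -- a structural fact that both you and the paper invoke with only a sketch, which is acceptable as it is standard). What your version buys is a marginally weaker hypothesis: you use bipartiteness only through $\lambda_n=-1$ and the zero trace, so the same computation gives the bound for any connected graph whose smallest transition eigenvalue is $-1$ -- though for connected graphs that is equivalent to bipartiteness, so nothing new is actually gained. What the paper's pairing buys is that the inequality $\frac{2}{1-\lambda_j^2}\ge 2$ and its equality case are immediate without appealing to Jensen. One small point of care in your write-up: for $n=2$ the Jensen step is over an empty index set and the average $\bar\lambda'$ is formally $0/0$, but the resulting bound $\frac{1}{2}+0$ is still correct; you may wish to note that degenerate case explicitly.
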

\begin{proof}
Let $A$ and $D$ denote the adjacency matrix and the diagonal degree matrix of $G$, respectively. We observe that 
\begin{align*}
T(G)=D^{-1}A=D^{-\frac{1}{2}}(D^{-\frac{1}{2}}AD^{-\frac{1}{2}})D^{\frac{1}{2}}.
\end{align*}
Hence, $T(G)$ is similar to the symmetric matrix $D^{-\frac{1}{2}}AD^{-\frac{1}{2}}$, so that its eigenvalues are real. Denote them by $1=\lambda_1 \ge \lambda_2 \ge \ldots \ge \lambda_n=-1.$ Since $G$ is bipartite, $\lambda_{n-j+1}=-\lambda_j,\, j=1,2,\ldots,n.$ Suppose for concreteness that exactly $q$ eigenvalues are zero, and let $\ell = \frac{n-q}{2}$. Then 
\begin{align*}
\kappa(G) &=\sum_{j=2}^n\frac{1}{1-\lambda_j} = 
\sum_{j=2}^\ell \left(\frac{1}{1-\lambda_j} + \frac{1}{1-\lambda_{n-j+1}}\right) + q + \frac{1}{2}\\
&=  q + \frac{1}{2} +\sum_{j=2}^\ell \frac{2}{1-\lambda_j^2} \ge  q + \frac{1}{2} + 2(\ell-1) =  n-\frac{3}{2}.
\end{align*}
Observe that if $q<n-2,$ then in fact  $\kappa(G)>  n-\frac{3}{2};$ it now follows that $\kappa(G)= n-\frac{3}{2}$ if and only if the transition matrix has rank $2$ -- i.e., if and only if $G$ is a complete bipartite graph.
\end{proof}
\begin{corollary}
\label{cor_minimum_kemeny_star_1640_24_oct_2019_NEW_16_dec_2019}
For any integer $n\geq 1$, the star $S(n)$ is the unique tree minimizing  Kemeny's constant value in the set of trees having $n$ vertices.
\end{corollary}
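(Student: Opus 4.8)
The plan is to derive Corollary \ref{cor_minimum_kemeny_star_1640_24_oct_2019_NEW_16_dec_2019} directly from Proposition \ref{min_kemeny_bipartite_graphs}. First I would recall that every tree $T$ is a connected bipartite graph, so Proposition \ref{min_kemeny_bipartite_graphs} applies: for any tree $T$ on $n$ vertices we immediately get $\kappa(T)\ge n-\frac32$, with equality if and only if $T$ is a complete bipartite graph.

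Next I would observe that among all trees, the only ones that are complete bipartite graphs are the stars. Indeed, a complete bipartite graph $K_{a,b}$ contains a cycle (of length $4$) whenever $a,b\ge 2$, so a complete bipartite graph with no cycle must have $a=1$ (or $b=1$), i.e.\ it is $K_{1,n-1}=S(n)$. Conversely $S(n)=K_{1,n-1}$ is complete bipartite. Hence, for a tree $T$ on $n$ vertices, equality $\kappa(T)=n-\frac32$ holds if and only if $T=S(n)$, which establishes that $S(n)$ is the unique minimizer of Kemeny's constant in the set of trees on $n$ vertices.

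Finally, I would handle the small cases to match the hypothesis $n\ge 1$ stated in the corollary: for $n=1$ the trivial tree is the only tree and has $\kappa=0$ by convention; for $n=2$ the only tree is $S(2)=K_{1,1}$. So the claim is vacuous/trivial for $n\le 2$ and follows from the argument above for $n\ge 2$; in all cases $S(n)$ is the unique minimizer.

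I do not anticipate a serious obstacle here — the only point requiring a moment's care is the characterization of which trees are complete bipartite, and this is settled by the cycle-in-$K_{a,b}$ observation above. One could alternatively note directly that a connected graph on $n$ vertices with at least $n$ edges contains a cycle, so a complete bipartite tree $K_{a,b}$ satisfies $ab = a+b-1$, forcing $a=1$ or $b=1$.
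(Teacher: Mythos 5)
Your proposal is correct and is exactly the paper's intended derivation: the corollary is stated immediately after Proposition \ref{min_kemeny_bipartite_graphs} with no separate proof, precisely because a tree is a connected bipartite graph and the only acyclic complete bipartite graphs are the stars $K_{1,n-1}=S(n)$. Your handling of the equality case and the small $n$ is fine; nothing is missing.
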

\begin{theorem}
\label{thm_minimization_trees_1116_31may}
Let $n$ and ${\delta}$ be integers such that $\mathcal{T}_{n,\delta}\neq \emptyset$. The unique tree in $\mathcal{T}_{n,{\delta}}$ minimizing  Kemeny's constant is the caterpillar $C_{\delta+1}((n-{\delta}-1)\textbf{e}_t)$, with $t=\lceil{\frac{{\delta}+1}{2}}\rceil$ and $\textbf{e}_t=(0,0,\dots,0,1,0,\dots,0)\in\amsmathbb{R}^{{\delta}+1}$.
\end{theorem}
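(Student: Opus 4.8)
The plan is to prove this by induction on $n-\delta-1$, i.e., on the number of ``extra'' vertices beyond the minimum needed to realize a tree of diameter $\delta$. The base case $n=\delta+1$ is trivial: the only tree in $\mathcal{T}_{\delta+1,\delta}$ is the path $P_{\delta+1}$, which is exactly the caterpillar $C_{\delta+1}(\bzero)$, and the formula $C_{\delta+1}((n-\delta-1)\textbf{e}_t)$ specializes to this path. For the inductive step, let $T \in \mathcal{T}_{n,\delta}$ be arbitrary with $n > \delta+1$; I want to show $\kappa(T) \ge \kappa(C_{\delta+1}((n-\delta-1)\textbf{e}_t))$, with equality forcing $T$ to be that caterpillar.

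First I would fix a diametral path $u = w_0 w_1 \cdots w_\delta = v$ in $T$ and observe that since $n > \delta+1$, there is at least one vertex off this path; choose a pendent vertex $z$ of $T$ that is \emph{not} one of the endpoints $u,v$ of the diametral path (such a $z$ exists because any longest branch hanging off the path, or the path itself if $n=\delta+1$—but here $n>\delta+1$—has a pendent leaf distinct from $u,v$; one must be slightly careful, but a counting/maximality argument handles this). Then $T-z$ still contains the diametral path, so $T-z \in \mathcal{T}_{n-1,\delta'}$ for some $\delta' \le \delta$; in fact $\delta' = \delta$ unless $z$ was needed to realize the diameter, which it is not since $z \notin \{u,v\}$ and we can pick $z$ far from the path. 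Apply Corollary \ref{cor_kemeny_pendent_vertex}:
\begin{equation*}
\kappa(T)=\frac{1}{n-1}\left((n-2)\kappa(T-z)+\mu(T-z,w)+n-\tfrac{3}{2}\right),
\end{equation*}
where $w$ is the neighbor of $z$. By induction, $\kappa(T-z) \ge \kappa(C_\delta)$ where $C_\delta := C_{\delta+1}((n-1-\delta-1)\textbf{e}_t)$ is the extremal caterpillar of order $n-1$. For the moment term, Lemma \ref{lemma_moment_star} gives only $\mu(T-z,w)\ge n-2$; the right comparison is with the moment of the caterpillar rooted at the appropriate place. So the strategy is: first show that if $T-z$ is \emph{not} already the extremal caterpillar $C_\delta$, the gain in the $\kappa(T-z)$ term strictly dominates any loss in the moment term; and separately, if $T-z = C_\delta$, analyze directly how to re-attach a pendent vertex to minimize $\kappa$, which by Proposition \ref{kemeny_caterpillars_1419_23_oct_2019} and the perturbation analysis in the proof of Theorem \ref{theorem_max_min_kemeny_caterpillars_1613_18apr} forces the new vertex to attach at the central vertex $c_t$, yielding exactly $C_{\delta+1}((n-\delta-1)\textbf{e}_t)$.

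The key auxiliary estimate, and what I expect to be the main obstacle, is controlling $\mu(T-z,w)$: the corollary's formula penalizes small moments, but we need an upper bound tied to the caterpillar's structure, not the generic lower bound from Lemma \ref{lemma_moment_star}. The clean way around this is to choose $z$ more cleverly — namely pick $z$ to be a pendent vertex maximizing $\dist(z, \text{diametral path})$, or, if no vertex lies off the path except in short stubs, pick $z$ so that $w$ is a central vertex of the path; then $\mu(T-z,w)$ is comparably small and the induction hypothesis on $\kappa(T-z)$ (which is where the real ``work'' of the diameter constraint is encoded) carries the inequality. Here one needs that removing $z$ does not decrease the diameter, which is why $z$ must avoid the two diametral endpoints — and it also must avoid being a cut vertex separating the path, but a pendent vertex never is. Finally, the equality analysis: equality in the corollary's formula plus the inductive equality case forces $T-z$ to be the extremal caterpillar of order $n-1$ and forces $w = c_t$, and then Corollary \ref{cor_minimum_kemeny_star_1640_24_oct_2019_NEW_16_dec_2019} and the strict-convexity computation from Theorem \ref{theorem_max_min_kemeny_caterpillars_1613_18apr}$(i)$ pin down $T$ uniquely. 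I would organize the write-up as: (1) reduce to the caterpillar case via the pendent-vertex removal and induction, handling the choice of $z$; (2) within caterpillars, invoke Theorem \ref{theorem_max_min_kemeny_caterpillars_1613_18apr}$(i)$ to identify the minimizer among $\mathcal{C}_{n,\delta+1}$; (3) verify that no caterpillar with $k < \delta+1$ central vertices — hence smaller diameter, so not in $\mathcal{T}_{n,\delta}$ — interferes, and that no tree in $\mathcal{T}_{n,\delta}$ that is not a caterpillar can match the bound, completing the uniqueness claim.
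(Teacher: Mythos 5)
Your overall strategy --- induction on $n$ by deleting a pendent vertex $z$ off a diametral path and invoking Corollary \ref{cor_kemeny_pendent_vertex} --- is genuinely different from the paper's argument, but it has a gap at exactly the point you flag as ``the main obstacle,'' and your proposed fix does not close it. Write $C^*_{n}=C_{\delta+1}((n-\delta-1)\textbf{e}_t)$ and note that $C^*_{n}$ is obtained from $C^*_{n-1}$ by attaching a pendent vertex at $c_t$; then Corollary \ref{cor_kemeny_pendent_vertex} reduces your inductive step to
\[
(n-2)\bigl(\kappa(T-z)-\kappa(C^*_{n-1})\bigr)\;\geq\;\mu(C^*_{n-1},c_t)-\mu(T-z,w).
\]
The induction hypothesis controls only the left-hand side, and only qualitatively ($\geq 0$, strict when $T-z\neq C^*_{n-1}$), while the right-hand side requires a \emph{lower} bound on $\mu(T-z,w)$ relative to $\mu(C^*_{n-1},c_t)$. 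Lemma \ref{lemma_moment_star} gives only $\mu(T-z,w)\geq n-2$, which falls short of $\mu(C^*_{n-1},c_t)$ by roughly $\delta^2/2$, and the paper contains no lower bound on moments of rooted trees of prescribed diameter (Proposition \ref{prop_upper_bound_moment_of_inertia_1546_1_nov} goes in the opposite direction). Choosing $z$ ``far from the path'' or so that $w$ is central does not visibly force $\mu(T-z,w)\geq\mu(C^*_{n-1},c_t)$, and whenever that inequality fails you must show that the $\kappa$-gap beats the moment deficit --- a quantitative statement the induction hypothesis does not supply. You would need a new lemma here (a sharp lower bound for $\mu(R,r)$ over rooted trees of given order and diameter, coupled with a bound relating the two deficits), which is essentially new work.

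The paper sidesteps this with a one-step comparison: it decomposes an arbitrary non-caterpillar $T$ along a diametral path as $\conc((T_1,r_1),\dots,(T_{\delta+1},r_{\delta+1}))$ and compares it, via Proposition \ref{prop_kemeny_recursive_22_oct_2019}, to the caterpillar $\tilde C=C_{\delta+1}(\textbf{m})$ with the \emph{same} size vector. Because the size vectors agree, the difference $(n-1)(\kappa(T)-\kappa(\tilde C))$ collapses to $\sum_i m_i(\kappa(T_i)-\kappa(S(m_i+1)))+(\bmu-\tilde\bmu)^T((n-1)\textbf{e}-\textbf{m})$, and both terms are nonnegative by Corollary \ref{cor_minimum_kemeny_star_1640_24_oct_2019_NEW_16_dec_2019} and Lemma \ref{lemma_moment_star}, with strictness when $T$ is not a caterpillar; the moment comparison is between two configurations with identical size data, which is precisely what makes it sign-definite and what your pendent-deletion recursion lacks. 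Your remaining steps (reducing to Theorem \ref{theorem_max_min_kemeny_caterpillars_1613_18apr}$(i)$ once the minimizer is known to be a caterpillar, and noting that the unconstrained minimizer there already has $p_1=p_{\delta+1}=0$) are fine and coincide with the paper.
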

\begin{proof}
If $\delta=0$, then $n=1$ and the result is immediate, so we assume that $\delta\geq 1$. Consider a tree $T\in\mathcal{T}_{n,{\delta}}$. If $T$ is a caterpillar, then we can write it as $T=C_{\delta+1}(\textbf{p})$ for some nonnegative integer vector $\textbf{p}=(p_1,p_2,\dots,p_{{\delta}+1})$ such that $p_1=p_{{\delta}+1}=0$, and we only need to apply part $(i)$ of Theorem \ref{theorem_max_min_kemeny_caterpillars_1613_18apr}. If $T$ is not a caterpillar, let $r_1r_2\dots r_{{\delta}+1}$ be a longest path in $T$. We can unambiguously write $T$ as the concatenation of suitable rooted trees $(T_1,r_1),(T_2,r_2),\dots,(T_{\delta+1},r_{\delta+1})$. Let $\textbf{m}=(m_i)\in\amsmathbb{R}^{\delta+1}$ be the size vector of $(T_1,r_1),(T_2,r_2),\dots,(T_{\delta+1},r_{\delta+1})$, and consider the caterpillar $\tilde{C}=C_{\delta+1}(\textbf{m})$. Notice that $\tilde{C}$ is the concatenation of the rooted stars $(S({m_1+1}),r_1),$ $(S({m_2+1}),r_2),\dots,(S({m_{\delta+1}+1}),r_{\delta+1})$, where the root $r_i$ of $S({m_i+1})$ is chosen to be the central vertex of the star. Let ${\bmu}$ be the moment vector of $(T_1,r_1),(T_2,r_2),\dots,(T_{\delta+1},r_{\delta+1})$, and let $\tilde{{\bmu}}$ be the moment vector of $(S({m_1+1}),r_1),$ $(S({m_2+1}),r_2),\dots,(S({m_{\delta+1}+1}),r_{\delta+1})$.
Using Proposition \ref{prop_kemeny_recursive_22_oct_2019}, we obtain
\begin{small}
\begin{align*}
(n-1)(\kappa(T)-\kappa(\tilde{C}))&=\sum_{i=1}^{\delta+1}m_i\kappa(T_i)
-\sum_{i=1}^{\delta+1}m_i\kappa(S(m_i+1))+(n-1)({\bmu}-\tilde{{\bmu}})^T\textbf{e}-({\bmu}-\tilde{{\bmu}})^T\textbf{m}\\
&=\sum_{i=1}^{\delta+1}m_i(\kappa(T_i)-\kappa(S({m_i+1})))+({\bmu}-\tilde{{\bmu}})^T
((n-1)\textbf{e}-\textbf{m}).
\end{align*}
\end{small}
Corollary \ref{cor_minimum_kemeny_star_1640_24_oct_2019_NEW_16_dec_2019} shows that $\kappa(T_i)\geq\kappa(S({m_i+1}))$ for each $i$; since $T$ is not a caterpillar, moreover, there exists an index $j$ such that $m_{j}\geq 1$ and the previous inequality is strict. We thus obtain
\begin{align*}
(n-1)(\kappa(T)-\kappa(\tilde{C}))>({\bmu}-\tilde{{\bmu}})^T
((n-1)\textbf{e}-\textbf{m}).
\end{align*}
The vector $((n-1)\textbf{e}-\textbf{m})$ is entrywise nonnegative. Additionally, Lemma \ref{lemma_moment_star} shows that $\bmu\geq \tilde\bmu$, so that ${\bmu}-\tilde{{\bmu}}$ is also entrywise nonnegative. We deduce that
\begin{align*}
(n-1)(\kappa(T)-\kappa(\tilde{C}))>0
\end{align*}
and, hence, $\kappa(T)>\kappa(\tilde{C})$. Since $\tilde{C}\in\mathcal{T}_{n,\delta}$, $T$ does not minimize  Kemeny's constant in $\mathcal{T}_{n,\delta}$.
\end{proof}
\begin{corollary}
\label{cor_lower_bound_kemeny_1117_31may}
Let $T$ be a nontrivial tree of order $n$ and diameter ${\delta}$. Then
\begin{itemize}
\item
if ${\delta}$ is odd, \hspace{.15cm}
$
\displaystyle
\kappa(T)\geq\frac{1}{n-1}\left(\frac{n{\delta}^2}{2}-\frac{{\delta}^3}{6}+n^2-n{\delta}-\frac{{\delta}^2}{2}-2n+\frac{7}{6}{\delta}+1 \right);
$
\item
if ${\delta}$ is even, \hspace{.15cm}
$
\displaystyle
\kappa(T)\geq \frac{1}{n-1}\left(\frac{n{\delta}^2}{2}-\frac{{\delta}^3}{6}+n^2-n{\delta} -\frac{{\delta}^2}{2}-\frac{5}{2}n+\frac{5}{3}{\delta}+\frac{3}{2} \right).
$
\end{itemize}
Additionally, in both cases, $T$ satisfies the bound with equality if and only if $T$ is the caterpillar $C_{\delta+1}((n-{\delta}-1)\textbf{e}_t)$, with $t=\lceil{\frac{{\delta}+1}{2}}\rceil$.
\end{corollary}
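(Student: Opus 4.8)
The plan is to derive Corollary \ref{cor_lower_bound_kemeny_1117_31may} as an immediate consequence of Theorem \ref{thm_minimization_trees_1116_31may} together with the closed-form expression for Kemeny's constant of a caterpillar given in Proposition \ref{kemeny_caterpillars_1419_23_oct_2019}. By Theorem \ref{thm_minimization_trees_1116_31may}, the unique minimizer of $\kappa$ over $\mathcal{T}_{n,\delta}$ is the caterpillar $C_* = C_{\delta+1}((n-\delta-1)\textbf{e}_t)$ with $t = \lceil\frac{\delta+1}{2}\rceil$, so the bound and its equality characterization follow once we compute $\kappa(C_*)$ explicitly. Thus the entire content of the proof is a routine but careful evaluation of the formula in Proposition \ref{kemeny_caterpillars_1419_23_oct_2019} at $k = \delta+1$ and $\textbf{p} = (n-\delta-1)\textbf{e}_t$.

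First I would set $k = \delta+1$ and $m := n-k = n-\delta-1$, so that $\textbf{p} = m\,\textbf{e}_t$. Then $\textbf{p}^TX\textbf{p} = m^2 x_{tt} = 0$ since $X$ has zero diagonal, and $\textbf{p}^TX\textbf{e} = m(X\textbf{e})_t$, where by \eqref{eq_Xei_1625_6_nov_2019} we have $(X\textbf{e})_t = t^2 - kt - t + \frac{k^2+k}{2}$. Plugging into Proposition \ref{kemeny_caterpillars_1419_23_oct_2019},
\[
\kappa(C_*) = \frac{1}{n-1}\left(2m\left(t^2-kt-t+\tfrac{k^2+k}{2}\right) + \frac{k^3}{3} - 2nk + n^2 - \frac{n}{2} + \frac{5}{3}k - \frac{1}{2}\right).
\]
Next I would substitute $k = \delta+1$ throughout and then split into the two cases according to the parity of $\delta$: if $\delta$ is odd then $t = \frac{\delta+2}{2}$, while if $\delta$ is even then $t = \frac{\delta+2}{2}$ as well — wait, more carefully, $t = \lceil\frac{\delta+1}{2}\rceil$, which equals $\frac{\delta+1}{2}$ when $\delta$ is odd and $\frac{\delta+2}{2} = \frac{\delta}{2}+1$ when $\delta$ is even. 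In each case one substitutes the corresponding value of $t$, expands $2m(t^2-kt-t+\frac{k^2+k}{2})$ as a polynomial in $n$ and $\delta$ (recalling $m = n-\delta-1$), collects terms with the remaining $\frac{k^3}{3} - 2nk + n^2 - \frac{n}{2} + \frac{5}{3}k - \frac{1}{2}$ after the substitution $k=\delta+1$, and simplifies. The odd case should yield the numerator $\frac{n\delta^2}{2} - \frac{\delta^3}{6} + n^2 - n\delta - \frac{\delta^2}{2} - 2n + \frac{7}{6}\delta + 1$, and the even case the numerator $\frac{n\delta^2}{2} - \frac{\delta^3}{6} + n^2 - n\delta - \frac{\delta^2}{2} - \frac{5}{2}n + \frac{5}{3}\delta + \frac{3}{2}$.

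There is essentially no conceptual obstacle here; the only thing that requires care is the algebraic bookkeeping in the two parity cases — in particular keeping track of the difference between $\frac{\delta+1}{2}$ and $\frac{\delta}{2}+1$ when squaring $t$, since this is exactly what produces the discrepancy between the two displayed numerators. The equality statement needs no separate argument: it is inherited verbatim from the uniqueness assertion in Theorem \ref{thm_minimization_trees_1116_31may}, since $C_* \in \mathcal{T}_{n,\delta}$ and every other tree in $\mathcal{T}_{n,\delta}$ has strictly larger Kemeny's constant. I would close by remarking that the nontriviality hypothesis on $T$ guarantees $n \geq 2$, hence $\delta \geq 1$ and $\mathcal{T}_{n,\delta} \neq \emptyset$, so Theorem \ref{thm_minimization_trees_1116_31may} applies.
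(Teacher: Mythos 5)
Your proposal is correct and follows exactly the paper's own route: invoke Theorem \ref{thm_minimization_trees_1116_31may} to identify the minimizer $C_{\delta+1}((n-\delta-1)\textbf{e}_t)$ and then evaluate Proposition \ref{kemeny_caterpillars_1419_23_oct_2019} at $k=\delta+1$, $\textbf{p}=(n-\delta-1)\textbf{e}_t$, splitting on the parity of $\delta$. The algebra checks out in both cases (using $(X\textbf{e})_t=\frac{k^2}{4}$ for $\delta$ odd and $\frac{k^2-1}{4}$ for $\delta$ even), and the equality characterization is indeed inherited from the uniqueness in the theorem.
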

\begin{proof}
The result is obtained by applying Theorem \ref{thm_minimization_trees_1116_31may} and by computing  Kemeny's constant of the caterpillar $C_{\delta+1}((n-{\delta}-1)\textbf{e}_t)$ using the formula given in Proposition \ref{kemeny_caterpillars_1419_23_oct_2019}.  
\end{proof}
\section{An upper bound on  Kemeny's constant for trees}
\label{sec_upper_bound}
One can check by inspection that the maximizer of  Kemeny's constant in $\mc{T}_{n,\delta}$ -- i.e., the set of trees having order $n$ and diameter $\delta$ -- is not a caterpillar in general. Hence, contrary to what happens for the lower bound (see Section \ref{sec_lower_bound_kememy}), we do not obtain an upper bound for Kemeny's constant in $\mc{T}_{n,\delta}$ from Proposition \ref{kemeny_caterpillars_1419_23_oct_2019}.

\begin{example}
	Note that any tree on $n \ge 4$ vertices with diameter $3$ is necessarily a caterpillar. Hence, by Theorem \ref{theorem_max_min_kemeny_caterpillars_1613_18apr}, the tree on $n$ vertices with diameter $3$ that maximizes Kemeny's constant is 
	$C_2(\lceil{\frac{n-2}{2}}\rceil\textbf{e}_1+\lfloor{\frac{n-2}{2}}\rfloor\textbf{e}_2)$; it is straightforward to determine that this maximum value is $\frac{3}{2}n-3+\frac{1}{2n-2}$ if $n$ is even, $\frac{3}{2}n-3$ if $n$ is odd.
	
	For trees of diameter $4,$ the maximizer of Kemeny's constant may no longer be a caterpillar. For instance,  consider trees on $13$ vertices with diameter $4$.  The value of Kemeny's constant for the caterpillar $C_3(5\textbf{e}_1 + 5\textbf{e}_3)$ 
 is equal to $\frac{43}{2}.$ In constrast, consider the tree $T$ on $13$ vertices formed from three copies of $S(4)$ by making each of their central vertices adjacent to a common $13$--th vertex. Then $T$ has diameter $4,$ but $\kappa(T) = \frac{47}{2} > \kappa(C_3(5\textbf{e}_1 + 5\textbf{e}_3)).$ 
\end{example}

Nevertheless, an upper bound can be found by means of an iterative use of Corollary \ref{cor_kemeny_pendent_vertex}.
\begin{theorem}
\label{thm_upper_bound_kemeny_29_oct_NEW}
Let $T$ be a tree of order $n$ and diameter $\delta$. Then
\begin{align*}
\kappa(T)\leq n\delta-\frac{\delta^2}{2}.
\end{align*}
\end{theorem}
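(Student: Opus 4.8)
The plan is to argue by induction on $n$, peeling off one pendent vertex at a time via Corollary~\ref{cor_kemeny_pendent_vertex}. The base case $n=1$ is immediate ($\kappa=0=n\delta-\frac{\delta^2}{2}$), so assume $n\ge 2$ and let $v$ be any pendent vertex of $T$ with neighbour $w$. Two structural observations drive everything. First, since $T$ is a tree and $v$ is a leaf, $\dist(u,u')$ is unchanged upon deleting $v$ for all $u,u'\in V(T-v)$, so $\diam(T-v)\le\delta$. Second, and crucially, $\dist(v,u)=1+\dist(w,u)$ for every $u\ne v$, so the eccentricity of $v$ in $T$ equals $1$ plus the eccentricity of $w$ in $T-v$; as the former is at most $\delta$, the eccentricity of $w$ in $T-v$ is at most $\delta-1$. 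I will combine the first observation with monotonicity of $x\mapsto(n-1)x-\frac{x^2}{2}$ on $[0,n-1]$ and the induction hypothesis to get $\kappa(T-v)\le(n-1)\delta-\frac{\delta^2}{2}$, and I will feed the second into a bound on $\mu(T-v,w)$.

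The one genuinely new ingredient is a bound on the moment in terms of order and eccentricity: for any tree $R$ on $m$ vertices and any $w\in V(R)$ whose eccentricity is $e$,
\begin{align*}
\mu(R,w)\le 2em-e^2-e-m+1.
\end{align*}
I would establish this in two steps. Rooting $R$ at $w$ and regrouping $\sum_{v}\dist(v,w)\deg(v)$ as a sum over edges gives $\mu(R,w)=\sum_{c\ne w}\bigl(2\dist(c,w)-1\bigr)=2\sum_{u\in V(R)}\dist(u,w)-(m-1)$. It then suffices to show $\sum_{u}\dist(u,w)\le em-\frac{e^2+e}{2}$; this is a level-counting argument: writing $L_i$ for the number of vertices at distance $i$ from $w$, one has $L_0=1$, $L_i\ge 1$ for $1\le i\le e$ (since $R$ is connected and $w$ has eccentricity exactly $e$), and $\sum_i L_i=m$, and over all such tuples $\sum_i iL_i$ is maximised by pushing all free mass to level $e$, i.e.\ by $(1,1,\dots,1,m-e)$.

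Finally I would assemble the pieces. Since $2em-e^2-e-m+1$ is nondecreasing in $e$ on $0\le e\le m-\frac12$, and $\delta-1\le n-2\le(n-1)-\frac12$, applying the lemma to $R=T-v$ (so $m=n-1$) with $e=\delta-1$ yields $\mu(T-v,w)\le 2(n-1)(\delta-1)-(\delta-1)^2-(\delta-1)-(n-2)$. Substituting this together with the induction hypothesis into Corollary~\ref{cor_kemeny_pendent_vertex} and clearing the factor $n-1$, the desired inequality $\kappa(T)\le n\delta-\frac{\delta^2}{2}$ reduces, after routine cancellation, to the scalar inequality $\frac{\delta^2}{2}-\delta+2n-\frac52\ge 0$; rewriting the left-hand side as $\frac12(\delta-1)^2+2n-3$ makes it clear this holds for every $n\ge 2$.

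The main obstacle is really the quantitative sharpening in the second observation. The crude estimate $\mu(T-v,w)\le 2\delta(n-2)$ (the degrees in $T-v$ sum to $2(n-2)$ and each distance is at most $\delta$) is too weak to close the induction; it is essential both that deleting a leaf drops the relevant eccentricity from $\delta$ to $\delta-1$, and that the moment is governed by the eccentricity of the basepoint rather than merely by $\delta$. Once the moment lemma and this eccentricity drop are in place, the remainder is bookkeeping.
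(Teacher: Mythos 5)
Your proof is correct, and it follows the same skeleton as the paper's: induction on $n$ through Corollary \ref{cor_kemeny_pendent_vertex}, with all the real work concentrated in a bound on $\mu(T-v,w)$. The genuine difference is in how the leaf is chosen and how the moment is controlled. The paper deletes an endpoint $v$ of a longest path, so that $\diam(T-v)\in\{\delta-1,\delta\}$, and then runs a two-case analysis using Proposition \ref{prop_upper_bound_moment_of_inertia_1546_1_nov}, which bounds the moment in terms of the \emph{diameter} of the rooted tree. You delete an arbitrary pendent vertex and instead track the \emph{eccentricity} of its neighbour $w$, which always drops to at most $\delta-1$ in $T-v$; your moment lemma $\mu(R,w)\le 2em-e^2-e-m+1$ is precisely the intermediate inequality (with $t$ the eccentricity of the root) that appears inside the paper's proof of Proposition \ref{prop_upper_bound_moment_of_inertia_1546_1_nov} before being relaxed up to the diameter. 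This buys you a uniform bound $\mu(T-v,w)\le 2(n-1)(\delta-1)-(\delta-1)^2-(\delta-1)-(n-2)$ with no case split --- in fact slightly stronger than the bound $2n\delta-\delta^2-n-3\delta+2$ the paper settles for --- and your closing scalar inequality $\tfrac12(\delta-1)^2+2n-3\ge 0$ holds with room to spare. The supporting steps all check out: the edge-regrouping identity $\mu(R,w)=2\sum_u\dist(u,w)-(m-1)$, the level-counting maximisation, the monotonicity ranges $e\le m-\tfrac12$ and $x\le n-1$, and the transport of the induction hypothesis via $\diam(T-v)\le\delta\le n-1$.
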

\begin{remark}
A different upper bound for  Kemeny's constant of a tree $T$ of order $n$ and diameter $\delta$ can be derived from expression \eqref{eq_kemeny_constant_tree_1723_05_nov} by looking at the norms of the degree vector $\textbf{d}$ and of the distance matrix $\Delta$ separately. For example, one easily obtains
\begin{align*}
\kappa(T)=\frac{\textbf{d}^T\Delta\textbf{d}}{4(n-1)}\leq \frac{\rho(\Delta)\|\textbf{d}\|_2^2}{4(n-1)}\leq \frac{\rho(\Delta)\|\textbf{d}\|_1^2}{4(n-1)}=\frac{\rho(\Delta)(2(n-1))^2}{4(n-1)}=\rho(\Delta)(n-1)
\end{align*}
where $\rho(\Delta)$ is the spectral radius of $\Delta$. Using for example \cite[Theorem 8.1.22.]{HJ}, we have that
\begin{align*}
\rho(\Delta)\geq \min_{1\leq i\leq n}\sum_{j=1}^n\Delta_{ij}\geq n-1.
\end{align*}
Hence, this method would provide an upper bound asymptotically larger than or equal to $n^2$, thus worse than the one given in Theorem \ref{thm_upper_bound_kemeny_29_oct_NEW}.
\end{remark}
Before proving Theorem \ref{thm_upper_bound_kemeny_29_oct_NEW}, we present a sharp upper bound for the moment of a rooted tree. Whoever has ever played on a seesaw can confirm that to maximize the moment of the gravity force one needs to push all the weight as far from the fulcrum as possible. Proposition \ref{prop_upper_bound_moment_of_inertia_1546_1_nov} shows that the \textit{seesaw principle} applies to the moment of a rooted tree too. Given integers $x\geq 1$ and $y\geq 0$, we define the \textit{rooted broom} $B(x,y)$ as the rooted tree obtained by attaching $y$ pendent vertices to an endpoint of a path of $x$ vertices, and by letting the other endpoint be the root (if $x=1$, $B(x,y)$ is the star $S(y+1)$, with the central vertex as root; see Figure \ref{fig_rooted_broom}). We also let $B(0,1)$ be the trivial tree, with the unique vertex as root. 
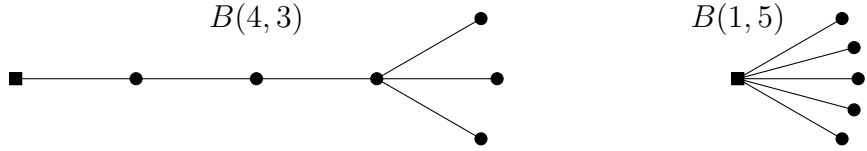
\begin{figure}[h]
\centering
\begin{tikzpicture}[scale=.8]
%
\draw  (4,1) node{$B(4,3)$};
\draw [fill,draw=black] (0-.1,-.1) rectangle ++(.2,.2);
\draw[fill]  (2,0) circle (0.1cm);
\draw[fill]  (4,0) circle (0.1cm);
\draw[fill]  (6,0) circle (0.1cm);
\draw[fill]  (6+1.7321,1) circle (0.1cm);
\draw[fill]  (6+1.7321,-1) circle (0.1cm);
\draw[fill]  (8,0) circle (0.1cm);
%
\draw[black] (0,0) -- (2,0) -- (4,0) -- (6,0);
\draw[black] (6,0) -- (6+1.7321,1);
\draw[black] (6,0) -- (6+1.7321,-1);
\draw[black] (6,0) -- (8,0);
%
%
\begin{scope}[shift={(12,0)}]
%
\draw  (0,1) node{$B(1,5)$};
\draw [fill,draw=black] (0-.1,-.1) rectangle ++(.2,.2);
\draw[fill]  (0+1.7321,1) circle (0.1cm);
\draw[fill]  (0+1.7321,-1) circle (0.1cm);
\draw[fill]  (1.9319,0.5176) circle (0.1cm);
\draw[fill]  (1.9319,-0.5176) circle (0.1cm);
\draw[fill]  (2,0) circle (0.1cm);
%
\draw[black] (0,0) -- (0+1.7321,1);
\draw[black] (0,0) -- (0+1.7321,-1);
\draw[black] (0,0) -- (1.9319,0.5176);
\draw[black] (0,0) -- (1.9319,-0.5176);
\draw[black] (0,0) -- (2,0);
\end{scope}
\end{tikzpicture}
\caption{\footnotesize{Two examples of rooted broom.}}
\label{fig_rooted_broom}
\end{figure}
\begin{proposition}
\label{prop_upper_bound_moment_of_inertia_1546_1_nov}
Let $T$ be a rooted tree of order $n$ and diameter $\delta$. Then its moment $\mu(T)$ satisfies
\begin{align*}
\mu(T)\leq 2n\delta-\delta^2-n-\delta+1,
\end{align*}
with equality if and only if $T=B(\delta,n-\delta)$. 
\end{proposition}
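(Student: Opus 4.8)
The plan is to prove the bound $\mu(T)\le 2n\delta-\delta^2-n-\delta+1$ by a two-stage argument: first reduce to the case where $T$ is a caterpillar-like tree by a moment-nondecreasing "pruning and re-hanging" operation, and then optimize over the remaining small family of trees explicitly. For the first stage, suppose $T$ has a vertex $v$ at distance $d$ from the root $r$ that is not on some fixed longest root-to-leaf path $P$, and consider moving a pendent vertex (or more generally a subtree) from a location close to $r$ to a location farther from $r$ along $P$. Since each term $\dist(w,r)\deg(w)$ in \eqref{def_moment_of_inertia_1501_01_nov} is a product of a distance and a degree, pushing mass outward increases distances while affecting degrees only at the two endpoints of the relocated edge; I would track this change carefully and show it is nonnegative (the "seesaw principle"), with strict increase unless $T$ already has the claimed extremal shape. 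A clean way to organize this: show that among trees of order $n$ and diameter $\delta$, the moment is maximized by a tree in which every non-path vertex is a pendent vertex attached to one of the two endpoints of a diametral path, and in fact all such pendent vertices may be consolidated at a single endpoint — which is exactly $B(\delta,n-\delta)$ (a path on $\delta+1$ vertices — wait, diameter $\delta$ means $\delta+1$ path vertices — with the remaining $n-\delta-1$... let me recount: $B(\delta, n-\delta)$ has $\delta$ path vertices plus $n-\delta$ pendents, total $n$; its diameter is $\delta$ since the longest path goes root$\to\cdots\to$endpoint$\to$pendent, which is $\delta$ edges).

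For the second stage I would simply compute $\mu(B(\delta,n-\delta))$ directly from \eqref{def_moment_of_inertia_1501_01_nov} and verify it equals $2n\delta-\delta^2-n-\delta+1$. Label the path vertices $u_0=r,u_1,\dots,u_{\delta-1}$, where $u_{\delta-1}$ carries the $n-\delta$ pendent vertices. Then $\deg(u_0)=1$, $\deg(u_i)=2$ for $1\le i\le \delta-2$, $\deg(u_{\delta-1})=n-\delta+1$, and each pendent has degree $1$ at distance $\delta$ from $r$. Summing $\sum_i \dist(u_i,r)\deg(u_i) + (n-\delta)\cdot\delta$ gives, after collecting the arithmetic series $\sum_{i=1}^{\delta-2} 2i = (\delta-2)(\delta-1)$ and the endpoint term $(\delta-1)(n-\delta+1)$, a quadratic in $n,\delta$ that I expect to match the claimed formula; this is routine but must be checked.

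The main obstacle is the first stage: making the "push mass outward" perturbation argument airtight, including the equality analysis. The subtlety is that one must consider relocating entire subtrees, not just single pendent vertices, and one must argue both that the diameter is not exceeded by the relocation and that it is not decreased below $\delta$ (otherwise the comparison is in the wrong class). I would handle this by always relocating to a vertex on a fixed diametral path and by choosing the relocation so the new diametral distance is still realized. An alternative route that may sidestep some of this: use Corollary \ref{cor_kemeny_pendent_vertex} in reverse as an inductive bookkeeping device, peeling off a pendent vertex $v$ from $T$ and relating $\mu(T,r)$ to $\mu(T-v, r)$ plus a correction term, then inducting on $n$; but the diameter constraint interacts awkwardly with pendent-vertex removal (removing $v$ may drop the diameter), so the perturbation/re-hanging argument within the fixed class $\mathcal{T}_{n,\delta}$ seems cleaner, and I would pursue that.
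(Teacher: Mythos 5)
Your second stage is fine: the computation $(\delta-2)(\delta-1)+(\delta-1)(n-\delta+1)+(n-\delta)\delta=2n\delta-\delta^2-n-\delta+1$ does check out, so you have correctly identified the extremal tree and verified that it attains the claimed value. The genuine gap is your first stage, which you yourself flag as ``the main obstacle'': the moment-nondecreasing re-hanging argument is only announced, not carried out. The missing pieces are exactly the ones you list -- the computation of the change in $\mu$ when an entire subtree (not just a pendent vertex) is relocated, the bookkeeping that keeps the diameter equal to $\delta$ throughout, and the strictness analysis needed for the ``if and only if'' -- plus one you do not mention: the moment is measured from the root while the diameter is a root-free quantity, so when the eccentricity $t$ of the root is strictly less than $\delta$, ``pushing mass outward along a diametral path'' is not the relevant operation (the relevant direction is away from the root), and you must separately argue that configurations with $t<\delta$ are never optimal. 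There is also a consolidation subtlety: pendent vertices attached to two distinct vertices at distance $\delta-1$ from the root each contribute $\delta$ to the distance sum just as in $B(\delta,n-\delta)$, so a naive ``each local move strictly increases $\mu$'' argument does not immediately rule such trees out; what actually penalizes them is that a second vertex at level $\delta-1$ costs a vertex at a lower level.

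The paper avoids all of this by a global rather than local argument, and you may want to adopt it: partition $V(T)$ into the level sets $S_i$ of vertices at distance $i$ from the root, with $s_i=|S_i|$ and $t=\max\{i \mid s_i\geq 1\}\leq\delta$. Since every edge of a tree joins consecutive levels and each vertex of $S_i$ ($i\geq 1$) has exactly one neighbour in $S_{i-1}$, one gets $\sum_{v\in S_i}\deg(v)=s_i+s_{i+1}$ and hence $\mu(T)=\sum_{i=1}^t i(s_i+s_{i+1})=2\sum_{i=1}^t is_i-n+1$. Thus $\mu(T)$ depends only on the level sequence, and maximizing $\sum is_i$ subject to $s_i\geq 1$ for $i\leq t$ and $\sum s_i=n$ forces $s_1=\dots=s_{t-1}=1$, giving $\mu(T)\leq 2nt-t^2-n-t+1$ with equality iff $T=B(t,n-t)$; monotonicity of $2nx-x^2-n-x+1$ in $x$ for $x\leq\delta$ then replaces $t$ by $\delta$ and settles the equality case in one stroke. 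This reduction to level sizes is precisely what makes the perturbation machinery, the diameter-preservation worries, and the consolidation step unnecessary; as written, your proposal does not yet constitute a proof.
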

\begin{proof}
For $i=0,1,\dots$, let $S_i$ be the set of vertices in $T$ having distance $i$ from the root $r$ of $T$, and let $s_i=|S_i|$. Define $t=\max\{i\mid s_i\geq 1\}$, and observe that $s_i\geq 1$ for $0\leq i \leq t$. Notice, also, that $0\leq t\leq \delta$. Using the definition \eqref{def_moment_of_inertia_1501_01_nov} of moment, we see that
\begin{align*}
\mu(T)&=\sum_{v\in V(T)}\dist(v,r)\deg(v)=\sum_{i=1}^ti\sum_{v\in S_i}\deg(v).
\end{align*}
Each vertex in $S_i$ ($i=1,2,\dots,t$) is adjacent to exactly one vertex in $S_{i-1}$. Moreover, no edge links a vertex in $S_i$ with a vertex in $S_j$ unless $j=i+1$ or $j=i-1$. This implies that $\sum_{v\in S_i}\deg(v)=|S_i|+|S_{i+1}|=s_i+s_{i+1}$ for $i=1,2,\dots,t$. We obtain
\begin{align*}
\mu(T)&=\sum_{i=1}^ti(s_i+s_{i+1})=\sum_{i=1}^tis_i+\sum_{i=2}^{t+1}(i-1)s_i=
\sum_{i=1}^tis_i+\sum_{i=1}^{t}(i-1)s_i\\
&=2\sum_{i=1}^tis_i-n+1
=2\sum_{i=1}^ti+2\sum_{i=1}^ti(s_i-1)-n+1\\
&=t^2+t-n+1+2\sum_{i=1}^ti(s_i-1)\leq t^2+t-n+1+2\sum_{i=1}^tt(s_i-1)\\
&=t^2+t-n+1+2t(n-1)-2t^2=2nt-t^2-n-t+1.
\end{align*}
Equality holds if and only if $(t-i)(s_i-1)=0$ $\forall i=1,2,\dots,t$ or, equivalently, if and only if $s_i=1$ $\forall i=1,2,\dots,t-1$. This occurs precisely when $T=B(t,n-t)$. Since
\begin{align*}
\frac{\partial}{\partial x}(2nx-x^2-n-x+1)=2n-2x-1>0 \hspace{.4cm}\mbox{for} \hspace{.4cm}t\leq x\leq \delta,
\end{align*}
we have that
\begin{align*}
2nt-t^2-n-t+1\leq 2n\delta-\delta^2-n-\delta+1,
\end{align*}
with equality if and only if $t=\delta$. We conclude that 
\begin{align*}
\mu(T)\leq 2n\delta-\delta^2-n-\delta+1,
\end{align*}
with equality if and only if $T=B(\delta,n-\delta)$.
\end{proof}
\begin{proof}[\textbf{Proof of Theorem \ref{thm_upper_bound_kemeny_29_oct_NEW}}]
We use induction on the order $n$ of $T$. If $n=1$, then $\kappa(T)=0$ by definition and the theorem holds. Let now $T$ be a tree of order $n\geq 2$ and diameter $\delta$, and suppose the theorem is true for trees of order up to $n-1$. Let $P$ be a longest path in $T$, and let $v$ be one of the two endpoints of $P$. Observe that $v$ is a pendent vertex in $T$, and let $w$ be its neighbor. Using Corollary \ref{cor_kemeny_pendent_vertex} we obtain
\begin{align}
\label{eqn_1721_22_nov_2019}
\kappa(T)=\frac{1}{n-1}\left((n-2)\kappa(T-v)+\mu(T-v,w)+n-\frac{3}{2}\right).
\end{align}
If $\diam(T-v)=\delta$, by Proposition \ref{prop_upper_bound_moment_of_inertia_1546_1_nov}
\begin{align*}
\mu(T-v,w)\leq 2(n-1)\delta-\delta^2-(n-1)-\delta+1=2n\delta-\delta^2-n-3\delta+2.
\end{align*}
If $\diam(T-v)=\delta-1$, by Proposition \ref{prop_upper_bound_moment_of_inertia_1546_1_nov}
\begin{align*}
\mu(T-v,w)&\leq 2(n-1)(\delta-1)-(\delta-1)^2-(n-1)-(\delta-1)+1\\
&=2n\delta-\delta^2-3n-\delta+4\\
&=(2n\delta-\delta^2-n-3\delta+2)-2n+2\delta+2\\
&\leq 2n\delta-\delta^2-n-3\delta+2.
\end{align*}
In either case, then, we have that $\mu(T-v,w)\leq 2n\delta-\delta^2-n-3\delta+2$.
Moreover, applying the inductive hypothesis to $T-v$ yields
\begin{align*}
\kappa(T-v)&\leq (n-1)\delta-\frac{\delta^2}{2}=n\delta-\frac{\delta^2}{2}-\delta
\intertext{if $\diam(T-v)=\delta$, and}
\kappa(T-v)&\leq (n-1)(\delta-1)-\frac{(\delta-1)^2}{2}=n\delta-\frac{\delta^2}{2}-n+\frac{1}{2}\\
&=\left(n\delta-\frac{\delta^2}{2}-\delta\right)+\delta-n+\frac{1}{2}\\
&\leq n\delta-\frac{\delta^2}{2}-\delta
\end{align*}
if $\diam(T-v)=\delta-1$. Hence, in either case $\kappa(T-v)\leq n\delta-\frac{\delta^2}{2}-\delta$. Substituting this into \eqref{eqn_1721_22_nov_2019}, we obtain
\begin{align*}
\kappa(T)&\leq
\frac{1}{n-1}\left((n-2)\left(n\delta-\frac{\delta^2}{2}-\delta\right)+2n\delta-\delta^2-n-3\delta+2+n-\frac{3}{2}\right)\\
&=\frac{n^2\delta-\frac{n\delta^2}{2}-n\delta-\delta+\frac{1}{2}}{n-1}\\
&=\frac{(n-1)(n\delta-\frac{\delta^2}{2})-\frac{\delta^2}{2}-\delta+\frac{1}{2}}{n-1}\\
&\leq
\frac{(n-1)(n\delta-\frac{\delta^2}{2})}{n-1}\\
&=n\delta-\frac{\delta^2}{2},
\end{align*}
thus validating the inductive step and concluding the proof of the theorem.
\end{proof}
Putting Corollary \ref{cor_lower_bound_kemeny_1117_31may} and Theorem \ref{thm_upper_bound_kemeny_29_oct_NEW} together, we obtain the following main result.
\begin{theorem}
Let $T$ be a tree of order $n$ and diameter $\delta$. Then
\begin{equation*}
n+\frac{\delta^2}{3}-\delta-1\;\leq\; \kappa(T)\;\leq\; n\delta-\frac{\delta^2}{2}.
\end{equation*}
\end{theorem}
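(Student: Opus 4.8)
The plan is to obtain the final bracketing inequality by simply combining the lower bound of Corollary~\ref{cor_lower_bound_kemeny_1117_31may} with the upper bound of Theorem~\ref{thm_upper_bound_kemeny_29_oct_NEW}. The upper bound $\kappa(T)\leq n\delta-\frac{\delta^2}{2}$ is already exactly the right-hand side, so no work is needed there. For the left-hand side, the task is to show that the (case-dependent) bounds from Corollary~\ref{cor_lower_bound_kemeny_1117_31may} each dominate the cleaner quantity $n+\frac{\delta^2}{3}-\delta-1$; equivalently, that this simpler expression is a (possibly weaker) lower bound valid in both parities of $\delta$.

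First I would treat the odd-$\delta$ case. Here Corollary~\ref{cor_lower_bound_kemeny_1117_31may} gives $\kappa(T)\geq \frac{1}{n-1}\left(\frac{n\delta^2}{2}-\frac{\delta^3}{6}+n^2-n\delta-\frac{\delta^2}{2}-2n+\frac{7}{6}\delta+1\right)$, so it suffices to verify
\begin{equation*}
\frac{n\delta^2}{2}-\frac{\delta^3}{6}+n^2-n\delta-\frac{\delta^2}{2}-2n+\frac{7}{6}\delta+1\;\geq\;(n-1)\left(n+\frac{\delta^2}{3}-\delta-1\right).
\end{equation*}
I would expand the right-hand side, bring everything to one side, and collect terms; the difference should simplify to a polynomial in $n$ and $\delta$ that is manifestly nonnegative on the relevant range (recall $\delta\leq n-1$ and $\delta\geq 1$). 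I expect the leading behaviour to reduce to something proportional to $\delta^2(n-1-\delta)$ plus lower-order nonnegative pieces, using the admissibility constraint $\mathcal{T}_{n,\delta}\neq\emptyset$, i.e.\ $n\geq \delta+1$. The even-$\delta$ case is handled identically, starting from the second bound in Corollary~\ref{cor_lower_bound_kemeny_1117_31may}, which differs only in lower-order terms ($-\frac{5}{2}n$ in place of $-2n$, etc.); since the even bound is termwise no larger than needed, the same reduction applies, and in fact the even case is slightly slacker.

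The main (and only) obstacle is the bookkeeping in the polynomial comparison: one must be careful that after clearing the $\frac{1}{n-1}$ factor the inequality really does hold for all admissible pairs $(n,\delta)$ rather than just asymptotically, so the argument should either exhibit the difference as an explicit sum of nonnegative monomials in the variables $\delta\geq 1$ and $n-\delta-1\geq 0$, or check the few small boundary cases ($\delta=1$, $n=\delta+1$) directly. Since the statement asks only for the two-sided bound and not for characterization of equality, no further analysis of the extremal trees is required here; the sharpness of each side is already recorded in Corollary~\ref{cor_lower_bound_kemeny_1117_31may} and in the broom-star discussion of Section~\ref{sec_asymptotic_analysis_upper_bound}. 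Hence the proof is a short verification, and I would present it as: ``The upper bound is Theorem~\ref{thm_upper_bound_kemeny_29_oct_NEW}. For the lower bound, apply Corollary~\ref{cor_lower_bound_kemeny_1117_31may} and note that in each parity the right-hand side there is at least $n+\frac{\delta^2}{3}-\delta-1$, as follows from the nonnegativity of the displayed polynomial difference.''
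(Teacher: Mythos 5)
Your proposal follows essentially the same route as the paper: the upper bound is quoted from Theorem~\ref{thm_upper_bound_kemeny_29_oct_NEW}, and the lower bound is obtained by showing, in each parity of $\delta$, that the right-hand side of Corollary~\ref{cor_lower_bound_kemeny_1117_31may} exceeds $(n-1)\bigl(n+\frac{\delta^2}{3}-\delta-1\bigr)$ by an explicitly nonnegative polynomial (in the paper, $\frac{1}{6}\bigl(\delta^2(n-\delta-1)+\delta\bigr)$ for odd $\delta$ and $\frac{1}{6}\bigl((\delta^2-3)(n-\delta-1)+\delta\bigr)$ for even $\delta\geq 2$), exactly the shape you anticipated. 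The only point to make explicit when writing it up is that the corollary applies only to nontrivial trees and the even-case argument needs $\delta\geq 2$, so the cases $n=1$ and $n=2$ must be verified directly, which your boundary-case check covers.
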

\begin{proof}
The second inequality is Theorem \ref{thm_upper_bound_kemeny_29_oct_NEW}. 
We easily check that the first inequality is satisfied for the trivial tree (whose Kemeny's constant value is $0$) and for the unique tree of order $2$ (whose Kemeny's constant value is $1/2$), so we assume that $n\geq 3$. We apply Corollary \ref{cor_lower_bound_kemeny_1117_31may} as follows. If $\delta$ is odd, 
\begin{align*}
(n-1)\kappa(T)&\geq
\frac{n{\delta}^2}{2}-\frac{{\delta}^3}{6}+n^2-n{\delta}-\frac{{\delta}^2}{2}-2n+\frac{7}{6}{\delta}+1\\
&=(n-1)\left(n+\frac{\delta^2}{3}-\delta-1\right)
+\frac{1}{6}(n\delta^2-\delta^3-\delta^2+\delta)\\
&\geq (n-1)\left(n+\frac{\delta^2}{3}-\delta-1\right),
\end{align*}
where the last inequality is due to the fact that 
\begin{align*}
n\delta^2-\delta^3-\delta^2+\delta\geq (\delta+1)\delta^2-\delta^3-\delta^2+\delta=\delta\geq 0.
\end{align*}
If $\delta$ is even,
\begin{align*}
(n-1)\kappa(T)&\geq
\frac{n{\delta}^2}{2}-\frac{{\delta}^3}{6}+n^2-n{\delta} -\frac{{\delta}^2}{2}-\frac{5}{2}n+\frac{5}{3}{\delta}+\frac{3}{2}\\
&=(n-1)\left(n+\frac{\delta^2}{3}-\delta-1\right)
+\frac{1}{6}(n\delta^2-\delta^3-\delta^2-3n+4\delta+3)\\
&\geq (n-1)\left(n+\frac{\delta^2}{3}-\delta-1\right),
\end{align*}
where the last inequality is due to the fact that the polynomial $p(n,\delta)=n\delta^2-\delta^3-\delta^2-3n+4\delta+3$ is nonnegative when $n\geq \delta+1$ and $\delta\geq 2$ (since $p(\delta+1,\delta)=\delta\geq 0$ and $\frac{\partial p}{\partial n}=\delta^2-3\geq 0$). The conclusion follows, since a tree of order $n\geq 3$ has diameter $\delta\geq 2$.
\end{proof}
\section{Asymptotic analysis of the upper bound}
\label{sec_asymptotic_analysis_upper_bound}
The lower bound for Kemeny's constant presented in Corollary \ref{cor_lower_bound_kemeny_1117_31may} is sharp: given two integers $n$ and $\delta$ such that $n\geq 2$ and $\mathcal{T}_{n,\delta}\neq \emptyset$, there exists a tree of order $n$ and diameter $\delta$ whose Kemeny's constant equals the bound -- namely, the caterpillar $C_{\delta+1}((n-{\delta}-1)\textbf{e}_t)$, with $t=\lceil{\frac{{\delta}+1}{2}}\rceil$. The same does not hold for the upper bound 
\begin{equation}
\label{eqn_upper_bound_recall_30_12_2019}
\kappa(T)\leq n\delta-\frac{\delta^2}{2}
\end{equation}
given in Theorem \ref{thm_upper_bound_kemeny_29_oct_NEW}. As an example, for the unique tree $U$ having order $n=2$ and diameter $\delta=1$, we get $\kappa(U)=1/2<3/2=n\delta-\delta^2/2$. Nevertheless, it can be shown that the upper bound \eqref{eqn_upper_bound_recall_30_12_2019} is \textit{asymptotically} sharp, in the sense described in the following theorem. 
\begin{theorem}
\label{thm_asymptotic_sharpness_upper_bound_30_12_2019}
There exists a sequence $(T_1,T_2,\dots)$ of trees such that, letting $n_i$ be the order of $T_i$ and $\delta_i$ be its diameter for $i=1,2,\dots$, 
\begin{equation*}
\begin{array}{l}
\displaystyle\lim_{i\rightarrow\infty}n_i=\infty\hspace{2cm}\mbox{and}\\
\displaystyle\lim_{i\rightarrow\infty}\frac{\kappa(T_i)}{n_i\delta_i-\delta_i^2/2}=1.
\end{array}
\end{equation*}
\end{theorem}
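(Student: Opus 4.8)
The plan is to exhibit an explicit family of trees -- the \emph{broom-stars} mentioned in the introduction -- and to compute (or estimate) $\kappa$ for them using the tools already developed, especially the concatenation formula of Proposition \ref{prop_kemeny_recursive_22_oct_2019}. A broom-star should be built so that it has large diameter $\delta_i$ \emph{and} many pendent vertices concentrated far from the center, so that the quantity $\mathbf{d}^T\Delta\mathbf{d}$ is as large as the upper bound permits. A natural candidate: take a central vertex, attach several long paths to it (giving diameter roughly $2\ell$), and attach a bundle of pendent vertices to the far end of each path -- i.e., a ``star of brooms.'' One then lets both the number of brooms and their handle-lengths grow with $i$, so that $n_i\to\infty$ and $\delta_i\to\infty$, while tracking the ratio $\kappa(T_i)/(n_i\delta_i-\delta_i^2/2)$.

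First I would fix the precise definition of $T_i$ and record its order $n_i$ and diameter $\delta_i$ in closed form. Next I would apply Proposition \ref{prop_kemeny_recursive_22_oct_2019} (or iterate Corollary \ref{cor_kemeny_pendent_vertex}), viewing $T_i$ as a concatenation of the rooted brooms forming two opposite handles through the center, with the remaining brooms hanging off the center as additional rooted subtrees. The moments $\mu_\ell$ of the constituent rooted brooms are exactly the quantities controlled by Proposition \ref{prop_upper_bound_moment_of_inertia_1546_1_nov}, and since a broom $B(x,y)$ achieves the extremal moment $2(x+y)x - x^2 - (x+y) - x + 1$, these contribute the dominant terms. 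Assembling the pieces, I would obtain an exact rational expression for $\kappa(T_i)$ as a polynomial in the parameters (number of brooms, handle length, bundle size) divided by $n_i-1$; the leading behavior should be $\kappa(T_i) = n_i\delta_i - \delta_i^2/2 - O(\text{lower order})$, where the lower-order error is controlled by how the parameters scale.

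The main obstacle, I expect, is choosing the growth rates of the parameters so that the subleading error terms are genuinely negligible compared with $n_i\delta_i - \delta_i^2/2 \sim n_i\delta_i$. If the handles are too long relative to the number of pendent vertices, the tree is essentially a path and $\kappa$ is only of order $n^2 \ll n\delta$ when $\delta \ll n$ -- but here $\delta$ itself is large, so one must balance: roughly, one wants $\delta_i \to \infty$ but $\delta_i = o(n_i)$, with enough mass at distance $\approx \delta_i/2$ from a ``balance point'' that $\mathbf{d}^T\Delta\mathbf{d} \approx 4(n_i-1)(n_i\delta_i - \delta_i^2/2)$. A clean choice -- for instance handles of length $\Theta(\sqrt{n_i})$ or a fixed small number of handles with most vertices bundled at the ends -- should make the computation tractable while driving the ratio to $1$. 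After obtaining the explicit formula for $\kappa(T_i)$, verifying $\lim_{i\to\infty}\kappa(T_i)/(n_i\delta_i - \delta_i^2/2) = 1$ reduces to comparing leading coefficients of polynomials, which is routine once the parametrization is pinned down.
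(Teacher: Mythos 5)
You have the right family in mind --- the broom-stars --- and the right tool (the concatenation formula of Proposition \ref{prop_kemeny_recursive_22_oct_2019}), which is exactly the paper's route. But the proposal stalls at the point you yourself flag as ``the main obstacle,'' and one of the two concrete scalings you offer is provably wrong. A \emph{fixed} number of handles cannot work: the paper observes that the two-cluster tree (essentially $BS(2,q,p)$) satisfies $\kappa\sim \frac{n\delta}{2}$ when $n\gg\delta$, i.e.\ only \emph{half} the bound, and the closed form of Proposition \ref{prop_kemeny_constant_broom_star} shows more generally that for a fixed number $t$ of arms the ratio $\kappa(BS(t,q,p))/(n\delta-\delta^2/2)$ stays bounded away from $1$ (the deficit is roughly $\frac{1}{t}+\frac{1}{2q}+\frac{q}{2p}$). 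One needs all three parameters to grow, with the bundles dominating the handles: $t\to\infty$, $q\to\infty$, and $q=o(p)$. The paper's choice is $T_i=BS(i,i,i^2)$, giving $\delta_i=2i$ and $n_i=i^3+i^2-i+1$.

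The second missing piece is the actual evaluation of $\kappa(BS(t,q,p))$, which your sketch treats as routine. It is not a one-shot application of the concatenation formula: the paper sets up a recursion in the number of arms (peeling off one broom at a time, with the residual broom-star as the last rooted tree in the concatenation), guesses the closed form by regression, and then verifies that the guess satisfies the recursion. Your proposed decomposition --- two opposite handles as the spine, the remaining brooms hanging at the center --- still requires $\kappa$ and the moment of the middle rooted subtree, which is itself a broom-star, so you land in the same recursion. Proposition \ref{prop_upper_bound_moment_of_inertia_1546_1_nov} supplies the moments of the individual brooms but does not by itself yield the asymptotics of $\kappa$: the leading behavior $\kappa\sim 2pqt$ only emerges after the cancellation of the $-\frac{2}{3}p^2$ term against $\frac{2}{3}p\,\frac{p^2-1}{p+q-1}$ in the exact formula, which is not visible at the level of your heuristic. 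So the approach is the paper's, but the two decisive steps --- the exact formula and the parameter scaling --- are missing, and the fallback scaling you propose would fail.
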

The remaining part of this section is dedicated to proving Theorem \ref{thm_asymptotic_sharpness_upper_bound_30_12_2019}. A first natural candidate to look at in order to check the asymptotic sharpness of the bound \eqref{eqn_upper_bound_recall_30_12_2019} is the maximizer of Kemeny's constant within the class of caterpillars having $n$ vertices and $k$ central vertices. From part $(ii)$ of Theorem \ref{theorem_max_min_kemeny_caterpillars_1613_18apr}, we know that its expression is $\tilde C=C_k(t_1\textbf{e}_1+t_k\textbf{e}_k)$, with $t_1=\lceil{\frac{n-k}{2}}\rceil$ and $t_k=\lfloor{\frac{n-k}{2}}\rfloor$. Assume, for simplicity, that $n-k$ is even and at least $2$, so that $\delta=k+1$. Using Proposition \ref{kemeny_caterpillars_1419_23_oct_2019}, we find that
\begin{align*}
\kappa(\tilde C)&=\frac{1}{n-1}\left(
\frac{n^2\delta}{2}-\frac{\delta^3}{6}-2n\delta+\delta^2+\frac{3}{2}n+\frac{\delta}{6}-\frac{3}{2}
\right).
\end{align*}
If the order $n$ is much larger than the diameter $\delta$, we see that $\kappa(\tilde{C})\sim \frac{n\delta}{2}$. As a consequence, if $n\gg \delta$, then $\kappa(\tilde{C})$ is -- asymptotically -- one half of the bound \eqref{eqn_upper_bound_recall_30_12_2019}. 

Let us take a closer look at the structure of $\tilde C$ in order to figure out how to increase its Kemeny's constant. $\tilde{C}$ consists of two star-like clusters connected by a path. Its relatively high Kemeny's constant value is due to the fact that escaping from one cluster to reach the other is quite laborious: a random walker starting in one of the clusters will most likely remain trapped for a long time before managing to find the central path and, eventually, reach the other cluster. On the other hand, traveling from one vertex to another in the same cluster is faster. A good strategy to further increase the expected travel-time (and  Kemeny's constant) is to add more clusters. By doing so, it will be less likely that two randomly chosen vertices belong to the same cluster. In this way, we end up with a particular tree called \textit{broom-star}: given integers $t\geq 2$, $q\geq 2$ and $p\geq 1$, the broom-star $BS(t,q,p)$ is the tree obtained by considering $t$ disjoint copies of the rooted broom $B(q,p)$, and by identifying their $t$ roots in a unique vertex which we call the \textit{center} of the broom-star (Figure \ref{fig_broom_star}). We shall refer to the $t$ paths of $q$ vertices hanging from the center as to the \textit{arms} of the broom-star. Observe that the diameter of $BS(t,q,p)$ is twice the length of the arms: $\delta=2q$. Moreover, the order of $BS(t,q,p)$ is $n=tp+tq-t+1$. 
\begin{figure}[h]
\centering
\begin{tikzpicture}[scale=.4]
\draw [] (0,0) circle (0.25cm);
\draw[fill]  (2,0) circle (0.1cm);
\draw[fill]  (4,0) circle (0.1cm);
\draw[fill]  (6,0) circle (0.1cm);
\draw[fill]  (6+1.7321,1) circle (0.1cm);
\draw[fill]  (6+1.7321,-1) circle (0.1cm);
\draw[fill]  (8,0) circle (0.1cm);
%
\draw[black] (.25,0) -- (2,0) -- (4,0) -- (6,0);
\draw[black] (6,0) -- (6+1.7321,1);
\draw[black] (6,0) -- (6+1.7321,-1);
\draw[black] (6,0) -- (8,0);
%
\begin{scope}[rotate=60]
\draw[fill]  (2,0) circle (0.1cm);
\draw[fill]  (4,0) circle (0.1cm);
\draw[fill]  (6,0) circle (0.1cm);
\draw[fill]  (6+1.7321,1) circle (0.1cm);
\draw[fill]  (6+1.7321,-1) circle (0.1cm);
\draw[fill]  (8,0) circle (0.1cm);
%
\draw[black] (.25,0) -- (2,0) -- (4,0) -- (6,0);
\draw[black] (6,0) -- (6+1.7321,1);
\draw[black] (6,0) -- (6+1.7321,-1);
\draw[black] (6,0) -- (8,0);
\end{scope}
%
\begin{scope}[rotate=120]
\draw[fill]  (2,0) circle (0.1cm);
\draw[fill]  (4,0) circle (0.1cm);
\draw[fill]  (6,0) circle (0.1cm);
\draw[fill]  (6+1.7321,1) circle (0.1cm);
\draw[fill]  (6+1.7321,-1) circle (0.1cm);
\draw[fill]  (8,0) circle (0.1cm);
%
\draw[black] (.25,0) -- (2,0) -- (4,0) -- (6,0);
\draw[black] (6,0) -- (6+1.7321,1);
\draw[black] (6,0) -- (6+1.7321,-1);
\draw[black] (6,0) -- (8,0);
\end{scope}
%
\begin{scope}[rotate=180]
\draw[fill]  (2,0) circle (0.1cm);
\draw[fill]  (4,0) circle (0.1cm);
\draw[fill]  (6,0) circle (0.1cm);
\draw[fill]  (6+1.7321,1) circle (0.1cm);
\draw[fill]  (6+1.7321,-1) circle (0.1cm);
\draw[fill]  (8,0) circle (0.1cm);
%
\draw[black] (.25,0) -- (2,0) -- (4,0) -- (6,0);
\draw[black] (6,0) -- (6+1.7321,1);
\draw[black] (6,0) -- (6+1.7321,-1);
\draw[black] (6,0) -- (8,0);
\end{scope}
%
\begin{scope}[rotate=240]
\draw[fill]  (2,0) circle (0.1cm);
\draw[fill]  (4,0) circle (0.1cm);
\draw[fill]  (6,0) circle (0.1cm);
\draw[fill]  (6+1.7321,1) circle (0.1cm);
\draw[fill]  (6+1.7321,-1) circle (0.1cm);
\draw[fill]  (8,0) circle (0.1cm);
%
\draw[black] (.25,0) -- (2,0) -- (4,0) -- (6,0);
\draw[black] (6,0) -- (6+1.7321,1);
\draw[black] (6,0) -- (6+1.7321,-1);
\draw[black] (6,0) -- (8,0);
\end{scope}
%
\begin{scope}[rotate=300]
\draw[fill]  (2,0) circle (0.1cm);
\draw[fill]  (4,0) circle (0.1cm);
\draw[fill]  (6,0) circle (0.1cm);
\draw[fill]  (6+1.7321,1) circle (0.1cm);
\draw[fill]  (6+1.7321,-1) circle (0.1cm);
\draw[fill]  (8,0) circle (0.1cm);
%
\draw[black] (.25,0) -- (2,0) -- (4,0) -- (6,0);
\draw[black] (6,0) -- (6+1.7321,1);
\draw[black] (6,0) -- (6+1.7321,-1);
\draw[black] (6,0) -- (8,0);
\end{scope}
\end{tikzpicture}
\caption{\footnotesize{The broom-star $BS(6,4,3)$. The center is the vertex in white.}}
\label{fig_broom_star}
\end{figure}
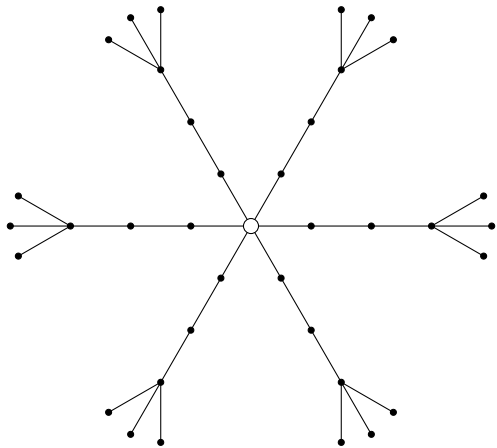

Attempting to find an explicit expression for  Kemeny's constant of a broom-star by using the general formula \eqref{eq_kemeny_constant_tree_1723_05_nov} turns out to be a laborious task. Instead, our strategy will consist of the following three steps:
\begin{enumerate}
\item[$(I)$]
exhibit a recursive formula that the numbers $\kappa(BS(t,q,p))$ satisfy by virtue of Proposition \ref{prop_kemeny_recursive_22_oct_2019};
\item[$(II)$]
guess an expression for $\kappa(BS(t,q,p))$; 
\item[$(III)$]
verify that the guess of step $(II)$ satisfies the recursive formula of step $(I)$.
\end{enumerate}
This will finally lead to the explicit expression of Proposition \ref{prop_kemeny_constant_broom_star} which, in turn, will allow to prove Theorem \ref{thm_asymptotic_sharpness_upper_bound_30_12_2019}. Step $(I)$ is performed in the next proposition, where Kemeny's constant of a broom-star is expressed in terms of Kemeny's constant of a smaller broom-star, having one less arm. For simplicity, we define $\kappa_{t,q,p}=\kappa(BS(t,q,p))$. 
\begin{proposition}
\label{prop_recursive_rule_kemeny_broom_stars}
Let $q\geq 2$ and $p\geq 1$ be integers. Then
\begin{itemize}
\item
$\displaystyle \kappa_{2,q,p}=\frac{1}{p+q-1}\left(
2p^2q + 4pq^2 + \frac{4}{3}q^3 - 6pq - 4q^2 + \frac{3}{2}p + \frac{25}{6}q - \frac{3}{2}
\right)$;
\item
$\displaystyle \kappa_{t+1,q,p}=\frac{A\kappa_{t,q,p}+B}{C}$ \hspace{.3cm} for $t=2,3,\dots$,       where
\begin{align*}
A&=pt+qt-t,\\
B&=\frac{q^3}{3}-pq+4tp+6tq-2{p}^{2}t-6{q}^{2}t+2{q}^{3}t+p{q}^{
2}-\frac{1}{2}-\frac{p}{2}-2t+{p}^{2}-{q}^{2}\\
&+6p{q}^{2}t-10pqt+4{p}^{2}qt+\frac{7}{6}q,
\\
C&=pt+qt+p+q-t-1.
\end{align*}
\end{itemize}
\end{proposition}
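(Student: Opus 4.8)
The plan is to realize a broom-star as a concatenation of two rooted trees and then apply Proposition~\ref{prop_kemeny_recursive_22_oct_2019} directly. Observe first that $BS(t,q,p)$ is precisely the concatenation $\conc((T_1,r_1),(T_2,r_2))$ with $k=2$, where we take $T_1=B(q,p)$ rooted at its path-endpoint (so that $r_1$ is the center of the broom-star), and $T_2$ is the tree obtained from $BS(t-1,q,p)$ by attaching one extra pendent vertex at its center, rooted at that center $r_2$. Indeed, adding the single edge $r_1r_2$ between the endpoints of these two rooted trees recreates the missing arm and fuses the centers, yielding $BS(t,q,p)$. For the base case $t=2$ one instead writes $BS(2,q,p)$ as the concatenation of two copies of $B(q,p)$, each rooted at its path-endpoint, so that $k=2q$ central vertices lie on the diameter; alternatively, and more simply for matching the stated formula, one concatenates two copies of $B(q,p)$ directly (here $k=2$), which is literally the definition of $BS(2,q,p)$.

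The key quantities needed to feed into Proposition~\ref{prop_kemeny_recursive_22_oct_2019} with $k=2$ are: the sizes $m_1,m_2$, the Kemeny constants $\kappa(T_1),\kappa(T_2)$, and the moments $\mu_1=\mu(T_1,r_1)$, $\mu_2=\mu(T_2,r_2)$, together with the $2\times 2$ matrix $X=\bigl[\begin{smallmatrix}0&1\\1&0\end{smallmatrix}\bigr]$. First I would compute $\mu(B(q,p),r)$ where $r$ is the path-endpoint: the path contributes $\sum_{i=1}^{q-1} i\cdot 2 + q\cdot(p+1)$ coming from the internal path vertices of degree $2$, the branch vertex of degree $p+1$, while the $p$ pendent vertices each sit at distance $q$ and have degree $1$, contributing $pq$; collecting terms gives a closed-form quadratic in $p,q$. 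Next, for $T_2$, note that $T_2$ differs from $BS(t-1,q,p)$ only by one pendent vertex at the center, so $m_2 = m(BS(t-1,q,p))+1 = (t-1)(p+q-1)+1$, the center's degree goes up by one, and hence $\mu(T_2,r_2)=\mu(BS(t-1,q,p),\text{center})+0$ plus the distance-$1$ contribution of the new pendent vertex, i.e.\ $\mu(T_2,r_2)=(t-1)\mu(B(q,p),r)+1$; similarly $\kappa(T_2)$ relates to $\kappa_{t-1,q,p}$ via Corollary~\ref{cor_kemeny_pendent_vertex} applied in reverse (or one notes $T_2 = \conc$ of the trivial tree with $BS(t-1,q,p)$ rooted at its center and invokes the $k=2$ formula once more). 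Substituting $n = tp+tq-t+1$ and all these ingredients into the Proposition~\ref{prop_kemeny_recursive_22_oct_2019} expression, then clearing the common denominator, produces an identity of the shape $\kappa_{t+1,q,p} = (A\kappa_{t,q,p}+B)/C$ with $A=m_1$-type and $C$ a degree-scaling factor; matching the polynomial $B$ to the claimed expression is then a finite symbolic computation.

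The main obstacle I anticipate is purely bookkeeping: the expression for $B$ in the statement is a degree-three polynomial in $p,q,t$ with fractional coefficients, so after plugging in $n=tp+tq-t+1$ and $\mu_1,\mu_2,m_1,m_2,\kappa(T_2)$ into the Proposition~\ref{prop_kemeny_recursive_22_oct_2019} formula one must carefully expand $\textbf{m}^TX(\textbf{m}+2\textbf{e})$, $(n-1)\bmu^T\textbf{e}-\bmu^T\textbf{m}$, and the constant $\frac{k^3}{3}-kn+n+\frac{k}{6}-\frac12$ with $k=2$, then group terms by powers of $t$. The coefficient $A=pt+qt-t = m_1'$ should drop out as the size of the trivial-tree-free factor, and $C = pt+qt+p+q-t-1 = n(BS(t+1,q,p))-1$ as the new normalizing denominator; getting these to come out exactly as stated, rather than as something merely proportional, is where sign and off-by-one errors would hide. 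For the base case $t=2$, the verification is a single application of Proposition~\ref{kemeny_caterpillars_1419_23_oct_2019} is not available (a broom-star need not be a caterpillar when $q\ge 2$), so one uses Proposition~\ref{prop_kemeny_recursive_22_oct_2019} with $k=2$ and the already-computed $\mu(B(q,p),r)$ and $\kappa(B(q,p))$; the latter, $\kappa(B(q,p))$, can itself be obtained either from \eqref{eq_kemeny_constant_tree_1723_05_nov} directly on the broom or, preferably, as $\kappa(BS(1,q,p))$-style expression derived by the same concatenation bootstrap, and then one simply checks that plugging $t=2$ into neither recursion but into the stated closed form is consistent with the recursion's output at $t=3$. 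The whole argument is elementary once the concatenation decomposition is set up; no conceptual difficulty beyond algebra remains.
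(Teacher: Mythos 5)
Your overall strategy --- realize the broom-star as a concatenation and feed the pieces into Proposition \ref{prop_kemeny_recursive_22_oct_2019} --- is the same as the paper's, but both of your concrete decompositions are wrong, and the error is structural rather than computational. The operation $\conc$ joins the roots $r_1,r_2$ by a \emph{new edge}; it does not identify them. Consequently $\conc((B(q,p),r_1),(B(q,p),r_2))$ is the caterpillar $C_{2q}(p\textbf{e}_1+p\textbf{e}_{2q})$, which has $2p+2q$ vertices and diameter $2q+1$, whereas $BS(2,q,p)$ has $2p+2q-1$ vertices and diameter $2q$; so your base case computes Kemeny's constant of the wrong tree. (Your parenthetical claim that $BS(2,q,p)$ need not be a caterpillar is also false: it is precisely $C_{2q-1}(p\textbf{e}_1+p\textbf{e}_{2q-1})$, and the paper obtains the base case by applying Proposition \ref{kemeny_caterpillars_1419_23_oct_2019} to exactly that caterpillar.) The same defect breaks the inductive step: in $\conc((B(q,p),r_1),(T_2,r_2))$ the vertex $r_1$ acquires degree $2$, not degree $t$, so it cannot ``become the center''; the resulting tree has one arm of length $q+1$ and, because of the extra pendent you attach to the center of $BS(t-1,q,p)$, a spurious leaf at the center as well. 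A correct two-block decomposition would be $BS(t,q,p)=\conc((B(q-1,p),v),(BS(t-1,q,p),c))$, obtained by deleting one edge incident to the center $c$; the paper instead writes $BS(t+1,q,p)$ as the $k=q$ concatenation of $S(p+1)$ rooted at its central vertex, $q-2$ copies of the trivial tree, and $BS(t,q,p)$ rooted at its center, which makes the size and moment vectors almost entirely zero and keeps the bookkeeping manageable.

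A secondary issue: your moment computation for $B(q,p)$ rooted at the path-endpoint is off by one in the distances. The branch vertex sits at distance $q-1$ (not $q$) and the internal path vertices at distances $1,\dots,q-2$, giving $\mu(B(q,p))=(q-2)(q-1)+(q-1)(p+1)+pq=2pq+q^2-p-2q+1$, which is the value the paper uses. Your identifications of $A=t(p+q-1)$ as the number of edges of the $BS(t,q,p)$ block and of $C$ as the order of $BS(t+1,q,p)$ minus one are correct, so once the decomposition and the moment are repaired, the rest of your plan (expand $\textbf{m}^TX(\textbf{m}+2\textbf{e})$ and $(n-1)\bmu^T\textbf{e}-\bmu^T\textbf{m}$, then collect powers of $t$) would go through.
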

\begin{proof}
Observe that the broom-star $BS(2,q,p)$ coincides with the caterpillar $C_{2q-1}(\textbf{p})$ where $\textbf{p}=p\,\textbf{e}_1+p\,\textbf{e}_{2q-1}\in\amsmathbb{N}^{2q-1}$. As a consequence, we obtain the expression for $\kappa_{2,q,p}$ by computing  Kemeny's constant of $C_{2q-1}(\textbf{p})$ via Proposition \ref{kemeny_caterpillars_1419_23_oct_2019}.

Suppose that $t\geq 2$. We can view the broom-star $BS(t+1,q,p)$ as the concatenation of rooted trees as follows. Let $T_1$ be the star $S(p+1)$ rooted in the central vertex, $T_2,T_3,\dots,T_{q-1}$ be copies of the trivial tree rooted in the unique vertex, and $T_q$ be the broom-star $BS(t,q,p)$ rooted in the center. Then, $BS(t+1,q,p)=\conc(T_1,T_2,\dots,T_q)$. We can compute $\kappa_{t+1,q,p}$ via Proposition \ref{prop_kemeny_recursive_22_oct_2019}:
\begin{equation}
\label{eqn_rec_formula_broom_star_conc_1801_14_nov}
\resizebox{1\hsize}{!}{$\displaystyle
\kappa_{t+1,q,p}
=
\frac{1}{n-1}\left(\sum_{i=1}^qm_i\kappa(T_i)
+\textbf{m}^TX(\textbf{m}+2\textbf{e})+(n-1){\bmu}^T\textbf{e}
-{\bmu}^T\textbf{m}
+\frac{q^3}{3}-qn+n+\frac{q}{6}-\frac{1}{2}\right)
$}
\end{equation}
where $\textbf{m}=(m_i)$ and ${\bmu}=(\mu_i)$ are the size vector and the moment vector of the concatenation, respectively, and $n$ is the order of $BS(t+1,q,p)$. Observe that 
\begin{align*}
&n=(t+1)(p+q)-t\;;\\
&\textbf{m}=\begin{bmatrix}
p & 0 & \dots & 0 & tp+tq-t
\end{bmatrix}^T;\\
&{\bmu}=\begin{bmatrix}
p & 0 & \dots & 0 & t\cdot \mu(B(q,p))
\end{bmatrix}^T
=\begin{bmatrix}
p & 0 & \dots & 0 & t(2pq+q^2-p-2q+1))
\end{bmatrix}^T;\\
&\sum_{i=1}^qm_i\kappa(T_i)=p\,\kappa(S(p+1))+(tp+tq-t)\,\kappa(BS(t,q,p))=p^2-\frac{p}{2}+(tp+tq-t)\kappa_{t,q,p}
\end{align*}
(in the last line we have used the same argument as for \eqref{eqn_pik_star_1845_7nov}). Substituting this into \eqref{eqn_rec_formula_broom_star_conc_1801_14_nov} yields the desired expression for $\kappa_{t+1,q,p}$.
\end{proof}
We have thus concluded step $(I)$. Step $(II)$ was performed via MATLAB, by applying multivariate regression on a sample of $100$ randomly generated broom-stars, and it produced the candidate expression \eqref{expression_kemeny_broom_stars_1912_14_nov}. To show that this candidate is correct (step $(III)$), we only need to check that it satisfies the recursive relation of Proposition \ref{prop_recursive_rule_kemeny_broom_stars}. 
\begin{proposition}
\label{prop_kemeny_constant_broom_star}
Let $t\geq 2$, $q\geq 2$ and $p\geq 1$ be integers. Kemeny's constant of the broom-star $BS(t,q,p)$ is
\begin{equation}
\label{expression_kemeny_broom_stars_1912_14_nov}
\kappa_{t,q,p}=
2pqt+{q}^{2}t-\frac{2}{3}{p}^{2}-\frac{4}{3}pq-tp-\frac{2}{3}{q}^{2}-2tq+\frac{4}{3}p+\frac{4}{3}q+t-\frac{1}{2}+\frac{2}{3}p\frac{{p}^{2}-1}{p+q-1}.
\end{equation}
\end{proposition}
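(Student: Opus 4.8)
The plan is to prove \eqref{expression_kemeny_broom_stars_1912_14_nov} by induction on $t\geq 2$, carrying out step $(III)$: one checks that the conjectured closed form satisfies the base case $t=2$ and the recursion of Proposition \ref{prop_recursive_rule_kemeny_broom_stars}. The structural observation that makes this tractable is that the coefficients $A$ and $C$ factor: $A=pt+qt-t=t(p+q-1)$ and $C=pt+qt+p+q-t-1=(t+1)(p+q-1)$. Since $p\geq 1$ and $q\geq 2$ force $p+q-1>0$, the recursion $\kappa_{t+1,q,p}=\frac{A\kappa_{t,q,p}+B}{C}$ is equivalent to
\begin{equation*}
(t+1)(p+q-1)\,\kappa_{t+1,q,p}=t(p+q-1)\,\kappa_{t,q,p}+B .
\end{equation*}

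Next I would exploit that the right-hand side of \eqref{expression_kemeny_broom_stars_1912_14_nov} is \emph{affine} in $t$: writing it as $\alpha t+\beta$ with $\alpha=2pq+q^{2}-p-2q+1$ and $\beta=-\tfrac{2}{3}p^{2}-\tfrac{4}{3}pq-\tfrac{2}{3}q^{2}+\tfrac{4}{3}p+\tfrac{4}{3}q-\tfrac12+\tfrac23 p\tfrac{p^{2}-1}{p+q-1}$, and setting $\Phi(t)\coloneqq(p+q-1)(\alpha t+\beta)$, one has $\Phi(t)=Pt+Q$ where $P\coloneqq(p+q-1)\alpha$ and $Q\coloneqq(p+q-1)\bigl(-\tfrac23 p^{2}-\tfrac43 pq-\tfrac23 q^{2}+\tfrac43 p+\tfrac43 q-\tfrac12\bigr)+\tfrac23 p(p^{2}-1)$ are genuine \emph{polynomials} in $p,q$ — the factor $p+q-1$ having cleared the only rational term of \eqref{expression_kemeny_broom_stars_1912_14_nov}. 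Because
\begin{equation*}
(t+1)\Phi(t+1)-t\Phi(t)=(t+1)^{2}P+(t+1)Q-t^{2}P-tQ=(2t+1)P+Q ,
\end{equation*}
the inductive step reduces, after dividing by the nonzero factor $(t+1)(p+q-1)$, to the single identity $(2t+1)P+Q=B$, which is linear in $t$ on both sides. It therefore splits into a comparison of the $t$-coefficients, namely $2P$ against the $t$-linear part of $B$, and a comparison of the $t$-free parts, $P+Q$ against the constant part of $B$; each is a finite expansion of degree at most three in $p$ and in $q$.

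For the base case I would substitute $t=2$ into \eqref{expression_kemeny_broom_stars_1912_14_nov}, multiply through by $p+q-1$, and verify that the outcome coincides with $(p+q-1)$ times the explicit value of $\kappa_{2,q,p}$ from the first bullet of Proposition \ref{prop_recursive_rule_kemeny_broom_stars}; this is once more a polynomial identity, now in $p$ and $q$ alone. Combined with the inductive step, this yields \eqref{expression_kemeny_broom_stars_1912_14_nov} for every $t\geq 2$.

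The main obstacle is not conceptual but bookkeeping: the identities $(2t+1)P+Q=B$ and the $t=2$ check involve multivariate polynomials whose expansion is error-prone, so the real care lies in organizing the terms — e.g.\ grouping by degree in $q$ — so that the cancellations become transparent, with a symbolic-algebra computation as the natural safeguard. It is worth noting that $P$, $Q$, and $B$ are all polynomials precisely because the coefficient of $\frac{1}{p+q-1}$ in \eqref{expression_kemeny_broom_stars_1912_14_nov} equals $\tfrac23 p(p^{2}-1)$, which is what keeps the recursion denominator-free after multiplication by $p+q-1$.
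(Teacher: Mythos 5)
Your proposal is correct and follows essentially the same route as the paper, which likewise proves the formula by checking that the conjectured expression satisfies the base case $t=2$ and the recursion of Proposition \ref{prop_recursive_rule_kemeny_broom_stars}, these requirements determining the sequence uniquely. Your added observations — that $A=t(p+q-1)$ and $C=(t+1)(p+q-1)$, and that the candidate is affine in $t$ so the inductive step collapses to the single polynomial identity $(2t+1)P+Q=B$ — are sound (the identity does check out) and merely make explicit the verification the paper leaves to the reader.
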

\begin{proof}
For any fixed $q^*\geq 2$ and $p^*\geq 1$, the sequence $\kappa_{2,q^*,p^*},\kappa_{3,q^*,p^*},\dots$ defined by \eqref{expression_kemeny_broom_stars_1912_14_nov} satisfies the requirements of Proposition \ref{prop_recursive_rule_kemeny_broom_stars}. The result follows since, clearly, those requirements cannot be satisfied by two distinct sequences.
\end{proof}
\begin{proof}[\textbf{Proof of Theorem \ref{thm_asymptotic_sharpness_upper_bound_30_12_2019}}]
Let $T_1$ be a nontrivial tree and, for $i=2,3,\dots$, let $T_i=BS(i,i,i^2)$. Observe that, if $i\geq 2$, the diameter of $T_i$ is $\delta_i=2i$ and its order is $n_i=i^3+i^2-i+1$. In particular,
\begin{equation*}
\lim_{i\rightarrow\infty}n_i=\infty.
\end{equation*}
Moreover, using Proposition \ref{prop_kemeny_constant_broom_star}, we find that
\begin{align*}
\lim_{i\rightarrow\infty}\frac{\kappa(T_i)}{n_i\delta_i-\delta_i^2/2}
&=
\lim_{i\rightarrow\infty}\frac{\frac{1}{3}\left(4i^4-4i^3-4i^2+7i-\frac{3}{2}+2\frac{i^2(i^4-1)}{i^2+i-1}\right)}
{2i^4+2i^3-4i^2+2i}=1.
\end{align*}
Therefore, the sequence $(T_1,T_2,\dots)$ satisfies the requirements in the statement of the theorem. 
\end{proof}

\bigskip
\noindent\textbf{Acknowledgements}\\

S.K.'s research is supported in part by the Natural Sciences and Engineering Research Council of Canada under grant number RGPIN--2019--05408.
\bigskip

\end{document}